\theoremstyle{plain}
\newtheorem{lemma}{Lemma}[section]
\newtheorem{proposition}[lemma]{Proposition}
\newtheorem{corollary}[lemma]{Corollary}
\newtheorem{theorem}[lemma]{Theorem}
\newtheorem{remark}[lemma]{Remark}
\newtheorem{example}[lemma]{Example}
\newcommand{\Lie}[1]{\operatorname{\textsl{#1}}}
\newcommand{\lie}[1]{\operatorname{\mathfrak{#1}}}
\newcommand{\un}{\lie{u}}
\newcommand{\Gtwo}{\ifmmode{{\rm G}_2}\else{${\rm G}_2$}\fi}
 \newcommand{\cyclic}{\mathop{\kern0.9ex{{+}\kern-2.2ex\raise-.28ex\hbox{\Large\hbox
 {$\circlearrowright$}}}}}
\newcommand{\real}[1]{\left\llbracket #1 \right\rrbracket}
\newcommand{\lcf}{\lbrack\!\lbrack}
\newcommand{\rcf}{\rbrack\!\rbrack}
\def\sideremark#1{\ifvmode\leavevmode\fi\vadjust{\vbox to0pt{\vss
 \hbox to 0pt{\hskip\hsize\hskip1em
 \vbox{\hsize2.5cm\tiny\raggedright\pretolerance10000
 \noindent #1\hfill}\hss}\vbox to8pt{\vfil}\vss}}}%
\newfont{\eusm}{eusm10 scaled \magstep1}
\newfont{\eusmiii}{eusm10 scaled \magstep3}
\title{On the classification of almost contact metric manifolds}
\author{Francisco~Mart\'\i n~Cabrera}
\address[Francisco~Mart\'\i n~Cabrera]{Departamento de Matemáticas, Estadística e Investigación   Operativa \\
  University of La Laguna\\ 38200 La Laguna, Tenerife, Spain}
\email{fmartin@ull.edu.es}
\date{\today}
\begin{document}

\maketitle

\begin{abstract}
On  connected  manifolds of dimension higher than three,   the non-existence of $132$ Chinea and González-Dávila types of almost contact metric structures is proved.  This is a consequence of    some interrelations among  components   of the intrinsic torsion of an almost contact metric structure. Such interrelations allow to describe the exterior derivatives of some relevant forms in the context of almost contact metric geometry. 
\end{abstract}
\vspace{4mm}

{\footnotesize
\noindent
{\rm Keywords:}    almost contact,
$G$-connection, intrinsic torsion, minimal connection, Lee form

\noindent
{\rm MSC2000:}    53D15; 53C10

}

\vspace{4mm}

\section{Introduction} 

In \cite{ChineaGonzalezDavila} Chinea and González-Dávila displayed a Gray-Hervella type classification for almost contact metric structures. Such a classification is based on the  decomposition of the space   possible intrinsic torsions into irreducible $\Lie{U}(n)$-modules ($\Lie{U}(n) \times 1$ is the structural group in case of almost contact metric structure). Since in general dimensions they obtained   a decomposition of the intrinsic torsion $\xi$ of the structure  into twelve $\Lie{U}(n)$-components $\xi_{(i)}$ respectively belonging to irreducible $\Lie{U}(n)$-modules $\mathcal{C}_i$,  from algebraic point view,  there are potentially $2^{12}$ classes. However, because of geometry, some of these classes could not exist on  connected manifolds. For instance, in \cite{JCMAR} Marrero  has  proved the non-existence of almost contact metric structure  of strict type $\mathcal{C}_5 \oplus \mathcal{C}_6$ defined on a connected manifold of dimension higher than $3$. A similar fact of non-existence has been  proved for 
$\Lie{G}_2$-structures in  \cite{Martin1} and for $\Lie{SU}(3)$-structures 
on six dimensional manifolds in \cite{Martin2}.  Each one of these results shows the non-existence of only one type of the considered $\Lie{G}$-structure.  In the present paper we prove the non-existence of $132$ types of almost contact metric structures defined on a connected manifold of dimension higher than $3$ (see Remark \ref{nonexist}). This is a consequence of   Theorem \ref{mainmain} below. Concretely we prove 
\vspace{1mm}

\textit{For an almost contact metric connected manifold of dimension $2n+1$, $n>1$:}

\noindent \textit{$(i)$  If the  structure is of type $\mathcal{C}_1 \oplus \mathcal{C}_2 \oplus \mathcal{C}_3 \oplus  \mathcal{C}_5 \oplus \mathcal{C}_6 \oplus \mathcal{C}_8 \oplus \mathcal{C}_9\oplus \mathcal{C}_{11}$ with $(\xi_{(5)} , \xi_{(6)}) \neq (0,0)$, then it is of type  $\mathcal{C}_1 \oplus \mathcal{C}_2 \oplus \mathcal{C}_3  \oplus \mathcal{C}_5 \oplus \mathcal{C}_8 \oplus \mathcal{C}_9 \oplus \mathcal{C}_{11}$, or of type 
$  \mathcal{C}_2 \oplus   \mathcal{C}_6    \oplus \mathcal{C}_9 $. } 
\vspace{1mm}

\noindent \textit{$(ii)$ If the  structure is of type $\mathcal{C}_2 \oplus \mathcal{C}_5 \oplus \mathcal{C}_7 \oplus  \mathcal{C}_9 $ with $(\xi_{(5)}, \xi_{(7)} ) \neq (0,0)$,  then it is of type  $\mathcal{C}_2 \oplus \mathcal{C}_7 \oplus \mathcal{C}_9$  or of type 
$  \mathcal{C}_2 \oplus   \mathcal{C}_5   \oplus \mathcal{C}_9  $. }

\vspace{1mm}

\noindent In the proof of these results we make use of some interrelations among components of the intrinsic torsion which are consequences of the identities  $d^2 \eta =0$ and $d^2F=0$, where $\eta$ is the one-form metrically equivalent to  the Reeb vector field $\zeta$  and  $F$ is the fundamental two-form  of the structure. Such interrelations are interesting on their own and give rise to  expressions for the exterior derivatives of the functions $d^* \eta$, $d^*F(\zeta)$ and the one-forms  $\xi_\zeta \eta$ and $\sum_{i=1}^{2n+1} (\xi_{(4) e_i} e_i)^\flat = \tfrac{n-1}{2} \theta$, where $\theta$ denotes the \textit{Lee form} considered in \cite{Gray-H:16},  $\{e_1, \dots , e_{2n+1} \}$ is an orthonormal basis for vectors and $X^\flat$ is the one-form metrically equivalent to the vector $X$. These functions and one-forms   determine the components of the intrinsic torsion in $\mathcal{C}_5$, $\mathcal{C}_6$,  $\mathcal{C}_{12}$ and $\mathcal{C}_{4}$, respectively. 

  Finally, we describe how to use the exterior derivatives $d\eta$, $d F$ and the Nijenhuis tensor $N_\varphi$ to determine the type of almost contact metric structure. This is used in some    examples.

\section{Preliminaries}{\indent}
\label{sect:prelima} \setcounter{equation}{0}
An \textit{ almost contact} structure $(\varphi, \zeta , \eta)$ on a manifold $M$ consists of a $(1,1)$-tensor $\varphi$, a vector field $\zeta$, called the {\it Reeb vector field},  and a one-form $\eta$ such that
\begin{equation} \label{blair}
\varphi^{2} = -I + \eta\otimes \zeta, \qquad \eta(\zeta) =1. 
\end{equation} 
Many authors include also that $\varphi \zeta=0$ and $\eta \circ \varphi =0$. However, such equalities are deducible  from \eqref{blair}. An explicit proof for this is given by Blair in \cite{Bl}.
The dimension of  $M$ must be $2n+1$. The presence of an almost contact structure is equivalent to say that there is a $\Lie{GL}(n, \mathbb{C}) \times 1$-structure defined  on $M$. 
 A manifold $M$ is said to be equipped with   an  \textit{almost contact
metric} structure, if there is an almost contact structure and a   Riemannian metric 
$\langle \cdot, \cdot \rangle$  on $M$ such that the following compatibility condition  is satisfied  
\begin{equation} \label{compat}
 \langle \varphi
X,\varphi Y\rangle = \langle X,Y\rangle - \eta(X)\eta(Y).
\end{equation}
This is   equivalent to say that there is a $\Lie{U}(n) \times 1$-structure defined on $M$.  We will make reiterated use of the \textit{musical isomorphisms} $\flat  :  \mathrm{T} M \to  \mathrm{T}^* M$
 and $\sharp : \mathrm{T}^* M \to \mathrm{T} M$, induced by  $\langle \cdot , \cdot \rangle$, defined respectively by $X^\flat = \langle X, \cdot  \rangle$  and $\langle \theta^\sharp , \cdot \rangle  = \theta$. Thus, using $\varphi \zeta =0$ and \eqref{compat}, one has  $\eta = \zeta^\flat$ and $\zeta = 	\eta^\sharp$.

Associated with an  almost contact 
metric structure,   the tensor $F= \langle \cdot, \varphi
\cdot\rangle$, called the {\em fundamental two-form}, is usually
considered. Using $F$ and $\eta$, $M$ can be oriented by fixing a
constant multiple of $F^n \wedge \eta = F \wedge
\stackrel{(n)}{\dots}\wedge F \wedge \eta$ as volume form. 

For almost contact metric structures, the cotangent space on each point
$\mbox{T}^{*}_m M$ is not irreducible under the action of the group
$\Lie{U}(n) \times 1$. In fact, $\mbox{T}^* M = \eta^{\perp} \oplus
\mathbb{R} \eta$,  where $\eta^\perp$ is the image under by $\flat$ of the distribution $\zeta^\perp$ orthogonal to $\zeta$. Taking this into account, it follows 
$$
\lie{so}(2n+1) \cong \Lambda^{2} \mbox{T}^* M = \Lambda^2
\eta^{\perp} \oplus \eta^{\perp} \wedge \mathbb R \eta.
$$
From now on we will denote $X_{\zeta^{\perp}} = X - \eta(X) \zeta$,
for all vector field  $X$. Since $\Lambda^2 \eta^{\perp} =
\lie{u}(n) \oplus \lie{u}(n)^{\perp}_{|\zeta^{\perp}} $, where
$\lie{u}(n)$ ($\lie{u}(n)^{\perp}_{|\zeta^{\perp}} $)
consists of those two-forms $b$ such that $b(\varphi X , \varphi Y)
= b( X_{\zeta^{\perp}} ,  Y_{\zeta^{\perp}})$ ($b(\varphi X ,
\varphi Y) = - b( X_{\zeta^{\perp}} , Y_{\zeta^{\perp}})$), we have
$$
\lie{so}(2n+1) = \lie{u}(n) \oplus \lie{u}(n)^{\perp}, \quad
\mbox{with } \lie{u}(n)^{\perp} =\lie{u}(n)^{\perp}_{|\zeta^{\perp}}
 \oplus \eta^{\perp} \wedge
\mathbb R \eta.
$$

 Denoting by $\nabla$ the Levi Civita connection, the {\it minimal
connection} $\nabla^{\Lie{U}(n)}$  is  the unique $\Lie{U}(n)$-connection  such that  $\xi_X = \nabla^{\Lie{U}(n)} _X-\nabla_X$  satisfies the condition $\xi_X \in  \lie{u}(n)^{\perp}$. The tensor  $\xi$ is referred to as the {\it intrinsic
torsion} of the almost contact metric structure 
\cite{CleytonSwann:torsion}.  The space $\mbox{T}^* M \otimes \lie{u}(n)^{\perp}$
of intrinsic torsions   has the following first decomposition into $\Lie{U}(n)$-modules:
$$
\mbox{T}^* M \otimes  \lie{u}(n)^{\perp} = (\eta^{\perp} \otimes
\lie{u}(n)^{\perp}_{|\zeta^{\perp}}) \oplus (\eta \otimes
\lie{u}(n)^{\perp}_{|\zeta^{\perp}}) \oplus ( \eta^{\perp} \otimes
\eta^{\perp} \wedge   \eta  ) \oplus (\eta \otimes
  \eta^{\perp} \wedge   \eta).
$$
Chinea  and Gonz{\'a}lez-D{\'a}vila \cite{ChineaGonzalezDavila}
showed that $\mbox{T}^* M \otimes \lie{u}(n)^{\perp}$ is
decomposed into twelve irreducible $\Lie{U}(n)$-modules $\mathcal
C_1, \dots , \mathcal C_{12}$, where
$$
\begin{array}{rclrcl}
\eta^{\perp} \otimes \lie{u}(n)^{\perp}_{|\zeta^{\perp}}
 & = &
\mathcal
C_1 \oplus \mathcal C_2 \oplus \mathcal C_3 \oplus \mathcal C_4, 
& \quad \eta^{\perp} \otimes \eta^{\perp} \wedge  \eta  & = & \mathcal C_5
\oplus \mathcal C_8 \oplus \mathcal C_9 \oplus \mathcal C_6 \oplus
\mathcal C_7  \oplus \mathcal C_{10}, \\
 \eta \otimes \lie{u}(n)^{\perp}_{|\zeta^{\perp}}
& = & \mathcal C_{11} , &
 \eta \otimes \eta^{\perp} \wedge   \eta & = & \mathcal
C_{12}.
\end{array}
$$
The $\Lie{U}(n)$-modules  $\mathcal C_1, \dots , \mathcal C_4$ are isomorphic
to  the Gray and Hervella's ones given  in \cite{Gray-H:16}. Furthermore, note that $\varphi$
restricted to $\zeta^{\perp}$ works as an almost complex structure
and, if one considers the $\Lie{U}(n)$-action on the bilinear
forms $\otimes^2 \eta^{\perp}$, then one has the decomposition
$$
\textstyle \otimes^2 \eta^{\perp} = \mathbb R \langle \cdot ,
\cdot \rangle_{|\zeta^{\perp}} \oplus \lie{su}(n)_s \oplus
\real{\sigma^{2,0}} \oplus \mathbb R F  \oplus \lie{su}(n)_a
\oplus \lie{u}(n)^{\perp}_{|\zeta^{\perp}}.
$$
The modules  $\lie{su}(n)_s$ ($\lie{su}(n)_a$) consists of
Hermitian symmetric (skew-symmetric) bilinear forms
orthogonal to $\langle \cdot , \cdot \rangle_{|\zeta^{\perp}}$
($F$),
 and  $\real{\sigma^{2,0}}$ ($\lie{u}(n)^{\perp}_{|\zeta^{\perp}}
 $) is the space of  anti-Hermitian
symmetric (skew-symmetric) bilinear forms. With respect to
the modules $\mathcal C_i$, one has $\eta^{\perp} \otimes
\eta^{\perp} \wedge  \mathbb R \eta \cong   \otimes^2
\eta^{\perp}$ and, using the $\Lie{U}(n)$-map $\xi^{\Lie{U}(n)}
\to - \xi^{\Lie{U}(n)} \eta = \nabla \eta$, it is obtained
$$
\mathcal C_5 \cong  \mathbb R \langle \cdot , \cdot
\rangle_{|\zeta^{\perp}} , \quad \mathcal C_8 \cong \lie{su}(n)_s,
\quad \mathcal C_9 \cong \real{\sigma^{2,0}}, \quad \mathcal C_6
\cong \mathbb R F, \quad \mathcal C_7 \cong  \lie{su}(n)_a, \quad
\mathcal C_{10} \cong \lie{u}(n)^{\perp}_{|\zeta^{\perp}}.
$$
In summary,  the space of intrinsic torsions
  $\mbox{T}^* M\otimes  \lie{u}(n)^{\perp}$ consists of those tensors
$\xi$ such that
\begin{equation} \label{inttorcar}
  \varphi \xi_X Y + \xi_X \varphi Y =
  \eta (Y) \varphi \xi_X \zeta +
\eta ( \xi_X \varphi Y) \zeta
\end{equation}
and, under the action of $U(n)\times 1$, is decomposed into:
\begin{enumerate}
\item[1.] if $n=1$, $ \xi \in \mbox{T}^* M \otimes
\un(1)^\perp = \mathcal C_{5} \oplus \mathcal C_{6} \oplus
\mathcal C_{9} \oplus \mathcal C_{12}$; 
\item[2.] if $n=2$, $
\xi \in \mbox{T}^* M \otimes \un(2)^\perp = \mathcal
C_{2} \oplus \mathcal C_{4} \oplus \dots \oplus \mathcal C_{12}$;
\item[3.] if $n \geqslant 3$, $ \xi \in \mbox{T}^* M
\otimes \un(n)^\perp =
  \mathcal C_{1}  \oplus \dots  \oplus \mathcal C_{12}$.
\end{enumerate}

  Some of these classes are referred to,  by
diverse authors \cite{Bl,ChineaGonzalezDavila}, as:

 $\{ \xi =0 \}=$ cosymplectic manifolds or integrable almost contact metric structure,
 $\;\mathcal C_1=$ nearly-K-cosymplectic manifolds,
 $\;\mathcal C_5=$ $\alpha$-Kenmotsu manifolds,
 $\;\mathcal C_6=$ $\alpha$-Sasakian manifolds,
  $\;\mathcal C_5\oplus \mathcal C_6=$ trans-Sasakian manifolds,
 $\;\mathcal C_2 \oplus \mathcal C_9=$ almost cosymplectic manifolds,
  $\;\mathcal C_6 \oplus \mathcal C_7=$ quasi-Sasakian manifolds,
 $\;\mathcal C_1 \oplus \mathcal C_5 \oplus \mathcal C_6=$ nearly-trans-Sasakian manifolds,
 $\;\mathcal C_1 \oplus \mathcal C_2 \oplus \mathcal C_9 \oplus \mathcal C_{10}=$ quasi-K-cosymplectic manifolds,
 $\;\mathcal C_3 \oplus \mathcal C_4 \oplus \mathcal C_5 \oplus \mathcal C_{6}
  \oplus \mathcal C_7 \oplus \mathcal C_{8}=$ normal manifolds, 
   $\;\mathcal C_3 \oplus \mathcal C_4 \oplus \mathcal C_5  \oplus \mathcal C_{8} =$ integrable almost contact structure, 
   etc.
  \vspace{2mm}

The intrinsic  torsion is given    by
\begin{align*} 
  \xi_X   =   -  \tfrac12  \varphi \circ \nabla_X \varphi +
\nabla_X \eta \otimes \zeta - \tfrac12 \eta \otimes  \nabla_X \zeta 
 =  \tfrac12 (\nabla_X \varphi) \circ \varphi   + \tfrac12
\nabla_X \eta \otimes \zeta - \eta \otimes \nabla_X \zeta
\nonumber
\end{align*}
(see \cite{GDMC1}). If the almost contact metric structure is
of type $\mathcal{C}_5 \oplus \dots \oplus \mathcal{C}_{10} \oplus
\mathcal{C}_{12}$, then the expression for the intrinsic torsion is
reduced to
 $ \xi_X   =  \nabla_X \eta \otimes \zeta - \eta \otimes \nabla_X
  \zeta$.

 The tensor $\xi_{(i)}$ will denote the component of
$\xi$  obtained by the $\Lie{U}(n)$-isomorphism  $(\nabla F)_{(i)}
=(-\xi F)_{(i)}
 \in \mathcal C_i \to \xi_{(i)}$. In this way we
are using the same terminology  used in
\cite{ChineaGonzalezDavila} by Chinea and González-Dávila when we
are referring to classes.

Some vector fields are involved in the characterization of certain types of almost contact metric manifolds. For instance,  if  $d^*$ denotes the coderivative and  $\{e_1, \dots , e_{2n+1} \}$ is a local  orthonormal frame field, 
the vector field $ \sum_{i=1}^{2n+1}   \xi_{e_i} e_i $ is given by 
\begin{equation} \label{firstvector}
  \textstyle   \sum_{i=1}^{2n+1}   \xi_{e_i} e_i  = - \tfrac12  \varphi (d^* F)^{\sharp} - d^* \eta \;
\zeta - \tfrac12  \nabla_{\zeta}\zeta.
\end{equation}
This identity follows from $\nabla F = - \xi F$ and using \eqref{inttorcar}. Now from the properties of each component $\xi_{(j)}$ of $\xi$, we have  $\sum_{i=1}^{2n+1}  \xi_{{(j)}e_i} e_i \neq 0 $ only for $j=4,5,12$ (see \cite{ChineaGonzalezDavila}, \cite{ChineaJCMarr1}), i.e. our vector field is only contributed  by the components of $\xi$ in 
$\mathcal C_4$, $\mathcal C_5$ and $\mathcal C_{12}$. 
 Then
\begin{equation} \label{sum4512}
 \textstyle \sum_{i=1}^{2n+1}   \xi_{e_i} e_i  = \sum_{i=1}^{2n+1}  \xi_{{(4)}e_i} e_i + \sum_{i=1}^{2n+1}  \xi_{{(5)}e_i} e_i + \sum_{i=1}^{2n+1}  \xi_{{(12)}e_i} e_i.
\end{equation}
On the other hand, it easy to see that  
$$ 
\textstyle   \sum_{i=1}^{2n+1}  \xi_{{(5)}e_i} e_i = - d^* \eta \; \zeta,  \qquad
 \sum_{i=1}^{2n+1}  \xi_{{(12)}e_i} e_i = -
 \nabla_{\zeta}\zeta.
 $$
Finally, from these two identities, \eqref{firstvector} and \eqref{sum4512}, it is deduced
\begin{equation} \label{leeac}
\textstyle  \sum_{i=1}^{2n+1}  \xi_{{(4)}e_i} e_i = - \tfrac12  \varphi (d^* F)^{\sharp} + \tfrac12 
\nabla_{\zeta}\zeta. 
 \end{equation}
\vspace{2mm}

Moreover, one has 
the vector field  $ \textstyle \sum_{i=1}^{2n+1} \xi_{e_i} \varphi e_i $. By using similar arguments as before we have the identity
$$
 \textstyle \sum_{i=1}^{2n+1} \xi_{e_i} \varphi e_i = - \tfrac12  (d^* F)^\sharp  - \tfrac12  d^* F(\zeta) \zeta
- \varphi \nabla_\zeta \zeta.
$$
 This second  vector field is only contributed by the components of $\xi$ in
$\mathcal C_4$ and   $\mathcal C_6$. It follows that such contributions are given by   
$$ \textstyle   \sum_{i=1}^{2n+1}   \xi_{{(4)}e_i}
\varphi e_i = - \tfrac12 (d^* F)^{\sharp} -  \varphi \nabla_\zeta \zeta + \tfrac12 d^*
F(\zeta) \zeta, \quad   \quad \sum_{i=1}^{2n+1}  \xi_{{(6)}e_i} \varphi e_i = - d^* F(\zeta)
\zeta.
$$ 
\vspace{2mm}

For a $2n$-dimensional  almost Hermitian manifold $(M,J,\langle \cdot, \cdot \rangle)$, where $J$ is the almost complex structure and  $\langle \cdot, \cdot \rangle$ is the metric compatible with $J$, the \textit{Lee one-form} $\theta$  is defined by $\theta = - \frac{1}{n-1} J d^* \omega$, where $\omega = \langle \cdot , J \cdot \rangle$ is the Kähler two-form (see \cite{Gray-H:16}). The one-form $\theta$ determines the component usually denoted by  $\xi_{(4)}$ of the intrinsic torsion of the almost Hermitian structure. Such a component is given by
$$
4 \xi_{(4)X} = X^\flat \otimes \theta^\sharp - \theta \otimes X-  JX^\flat \otimes J\theta^\sharp +  J \theta  \otimes J X.
$$
 Note that $\sum_{i=1}^{2n} \xi_{e_i} e_i = \frac{n-1}{2} \theta^\sharp$.
 \vspace{2mm}
 
 In  the context of almost contact metric geometric, taking \eqref{leeac} into account,   the Lee form is defined by 
 $$
(n-1) \theta  =  -   \varphi (d^* F)^{\sharp} +
\nabla_{\zeta}\eta, 
$$
where $2n+1$ is the dimension of the almost contact metric  manifold $(M, \varphi, \zeta, \eta, \langle \cdot , \cdot \rangle)$. 
The  component $\xi_{(4)}$ is given by
$$
4 \xi_{(4)X} = X_{\zeta^\perp}^\flat \otimes \theta^\sharp - \theta \otimes X_{\zeta^\perp}-  \varphi X^\flat \otimes \varphi \theta^\sharp +  \varphi \theta  \otimes \varphi X,
$$
where $X_{\zeta^\perp} = X - \eta(X) \zeta$.

\begin{remark}
  {\rm For using simpler and standard notation,  we recall that $\lambda_0^{p,q}$ is a complex irreducible $\Lie{U}(n)$-module coming from the $(p,q)$-part of the  
complex exterior algebra, and that its corresponding dominant weight in standard
coordinates is given by $(1, \dots,1,0, \dots,0, -1, \dots , -1)$, where $1$ and $-1$ are repeated $p$ and $q$ times, respectively. By analogy with the exterior algebra, there
are also complex irreducible $\Lie{U}(n)$-modules $\sigma^{p,q}_0$, with dominant weights $(p,0, \dots ,0, -q)$ 
coming from the complex symmetric algebra. The notation $\lcf V \rcf$ stands for the real vector space underlying a complex vector space $V$, and $[W ]$ denotes a real vector space that admits $W$ as its complexification. Thus for the $\Lie{U}(n)$-modules above mentioned one has
$$
\eta^{\perp} \cong \lcf \lambda^{1,0} \rcf, \quad  \lie{u}(n)  \cong  [\lambda^{1,1}], \quad  \lie{su}(n)_s \cong  \lie{su}(n)_a \cong  [\lambda^{1,1}_0] ,  \quad \lie{u}(n)^{\perp}_{|\zeta^{\perp}} \cong  \lcf \lambda^{2,0} \rcf.
$$

The space of two forms  $\Lambda^2 \mathrm{T}^* M$ is decomposed into irreducible  $\Lie{U}(n)$-components as follows:
$$
\Lambda^2 \mathrm{T}^* M =   \mathbb{R} \, F+ [\lambda_0^{1,1}]  + \lcf \lambda^{2,0} \rcf +   \eta \wedge  \lcf \lambda^{1,0} \rcf . 
$$
The components of a two-form $\alpha$ are given by 
\begin{gather*}
2 \alpha_{[\lambda^{1,1}]} (X,Y)  =  \alpha (\varphi^2 X , \varphi^2 Y) +  \alpha (\varphi X , \varphi Y),  \quad
2 \alpha_{\lcf \lambda^{2,0}\rcf} (X,Y)  =  \alpha (\varphi^2 X , \varphi^2 Y) -  \alpha (\varphi X , \varphi Y),  \\
\alpha_{\eta \wedge \lcf \lambda^{1,0} \rcf  }  =  \eta  \wedge (\zeta \lrcorner \alpha),
\end{gather*}
where $\lrcorner$ denotes the interior product.
We will use the natural extension  to forms of the metric $\langle \cdot , \cdot \rangle$. Thus, for all $p$-forms $\alpha$, $\beta$,  
 $$
 \langle \alpha , \beta \rangle = \tfrac{1}{p!} \textstyle  \sum_{i_1, \dots, i_p=1}^{2n+1} \alpha (e_{i_1} , \dots , e_{i_p})
\beta (e_{i_1} , \dots , e_{i_p}).
$$ 
 Using this product we have $\alpha_{\mathbb{R}F} = \frac1{n} \langle \alpha , F \rangle F$.
 
  In the sequel, we will consider the orthonormal basis for  vectors   $\{e_1, \dots, e_{2n}, e_{2n+1} = \zeta\}$. Likewise, we will use the summation convention. The repeated indexes will mean that the sum is extended from $i=1$ to $i=2n$.  Otherwise, the sum will be explicitly  written. 

   }
  \end{remark}

\section{Exterior derivatives of relevant forms of the structure} 
\setcounter{equation}{0}

In this section we will display  several identities relating components of the intrinsic torsion which are consequences of the equalities $d^2 F=0$ and $d^2 \eta=0$. They are interesting  on their own and we will use later some of them. 
 Some of those  identities   were already obtained in \cite{GDMC1} for the particular case of  almost contact metric structures of type $\mathcal{C}_1 \oplus \ldots \oplus \mathcal{C}_{10}$. Here the parts  $\xi_{(11)}$ and $\xi_{(12)}$ of the intrinsic torsion  are  also considered.  
 
 As applications of the  identities, we will obtain the $\Lie{U}(n)$-components of the exterior derivatives of  the one-forms  
$\theta$  and $\xi_\zeta \eta$, and the functions $d^*\eta$, $d^*F(\zeta)$. These one-forms and functions determine $\xi_{(4)}$,  
  $\xi_{(12)}$,  $\xi_{(5)}$ and  $\xi_{(6)}$, respectively.       
 
 \begin{lemma} \label{lambdaunouno}
  For almost contact metric manifolds of dimension $2n+1$, $n>1$, the
following identity is satisfied
{\rm  \small
 \begin{align*}
   0   = &
    \tfrac{n-2}{n-1} \textstyle   \langle \nabla^{\Lie{U}(n)}_{\varphi^2  X} \xi_{(4)e_i} { e_i},    Y \rangle
   -  \tfrac{n-2}{n-1} \textstyle   \langle \nabla^{\Lie{U}(n)}_{\varphi^2 Y} \xi_{(4) e_i} { e_i},    X \rangle
  - \tfrac{2}{n-1} \textstyle  (\nabla^{\Lie{U}(n)}_{e_j}  (\xi_{(4)e_i} { e_i})^\flat)) (\varphi e_j)  F(X,Y)
  \\
  &
   - 2 \textstyle  \langle (\nabla^{\Lie{U}(n)}_{e_i} \xi_{(3)})_{ X} Y,  { e_i} \rangle
    + 2 \textstyle  \langle (\nabla^{\Lie{U}(n)}_{e_i} \xi_{(3)})_{ Y}   X, { e_i} \rangle
  - \tfrac{n-2}{n-1} \textstyle  \langle \nabla^{\Lie{U}(n)}_{\varphi  X} \xi_{(4)e_i} { e_i}, \varphi  Y \rangle
   \\
   &
   +  \tfrac{n-2}{n-1} \textstyle   \langle \nabla^{\Lie{U}(n)}_{\varphi Y} \xi_{(4)e_i} { e_i}, \varphi  X \rangle
- 3 \textstyle  \langle \xi_{(1)X} e_i, \xi_{(2)Y} e_i \rangle 
+ 3  \textstyle  \langle \xi_{(1)Y} e_i, \xi_{(2)X} e_i \rangle     
\\
&
- \tfrac{2}{n^2} d^*\eta d^*F(\zeta) F(X,Y) 
+ \tfrac{4}{n} d^*\eta (\xi_{(7)X} \eta)(Y) 
- \tfrac{4}{n} d^*F(\zeta)    (\xi_{(8)X}  \eta) (\varphi Y) 
\\
&
+ 4 \langle   \xi_{(7)X}  \zeta, \xi_{(8)Y}  \zeta \rangle -  4 \langle   \xi_{(7)Y}  \zeta, \xi_{(8)X}  \zeta \rangle
+ 4 \langle   \xi_{(11)\zeta }  X , \xi_{(10)Y}  \zeta \rangle -  4 \langle   \xi_{(11)\zeta }  Y, \xi_{(10)X}  \zeta \rangle.
\end{align*} }
 \end{lemma}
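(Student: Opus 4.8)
The plan is to derive this identity as a consequence of $d^2 F = 0$, expanded on the $[\lambda^{1,1}_0]$-component. The starting point is the well-known formula expressing $dF$ in terms of the intrinsic torsion $\xi$ (equivalently $\nabla F = -\xi F$), namely $dF(X,Y,Z) = -\sum_{\text{cyclic}} (\xi_X F)(Y,Z)$. One then computes $d(dF)$ by applying the covariant-exterior-derivative machinery: for a $3$-form $\beta$, $d\beta(X_0,\dots,X_3) = \sum_i (-1)^i (\nabla_{X_i}\beta)(\dots\widehat{X_i}\dots)$, where one may use the minimal connection $\nabla^{\Lie{U}(n)}$ provided one carefully accounts for the torsion terms $\xi$ that relate $\nabla^{\Lie{U}(n)}$ to the Levi-Civita connection. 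The $d^2 = 0$ identity then produces a quadratic-plus-first-derivative relation among the $\xi_{(i)}$.

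The key steps, in order, would be: (1) write $dF$ using the decomposition $\xi = \sum_{i=1}^{12}\xi_{(i)}$ and the explicit formulas for how each $\xi_{(i)}$ acts, recorded in Section 2 and in \cite{ChineaGonzalezDavila}, \cite{GDMC1}; (2) apply $d$ again, writing $d^2 F$ in terms of $\nabla^{\Lie{U}(n)}$, so that derivative terms $\nabla^{\Lie{U}(n)}\xi_{(i)}$ and bilinear terms $\xi_{(i)}\cdot\xi_{(j)}$ appear; (3) project the resulting $4$-form identity $0 = d^2F$ onto the piece that detects the $[\lambda^{1,1}_0]$-type content — concretely, contract one pair of indices against $\sum_i e_i\wedge e_i$-type patterns and symmetrize/skew-symmetrize in $X,Y$ as dictated by the right-hand side (note the antisymmetry in $X\leftrightarrow Y$ of every term in the statement); (4) use the characterization \eqref{inttorcar} of intrinsic torsions, the identities \eqref{firstvector}–\eqref{leeac} for $\sum_i \xi_{e_i}e_i$ and its components, and the facts that only $\xi_{(4)}, \xi_{(5)}, \xi_{(12)}$ contribute to that vector, plus $d^*\eta$, $d^*F(\zeta)$ being the scalar invariants attached to $\xi_{(5)}, \xi_{(6)}$; (5) discard all contributions that land in $\Lie{U}(n)$-modules other than $[\lambda^{1,1}_0]$ — in particular the $\mathbb{R}F$-trace part and the $\lcf\lambda^{2,0}\rcf$ and $\eta\wedge\lcf\lambda^{1,0}\rcf$ parts — keeping only what survives in the target module, which is exactly the asserted combination of $\nabla^{\Lie{U}(n)}\xi_{(3)}$, $\nabla^{\Lie{U}(n)}(\xi_{(4)e_i}e_i)$ and the various bilinear cross-terms $\xi_{(1)}\xi_{(2)}$, $\xi_{(7)}\xi_{(8)}$, $\xi_{(10)}\xi_{(11)}$.

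A useful bookkeeping simplification: since the identity is antisymmetric in $X,Y$ and the contraction is over the $\zeta^\perp$-frame $\{e_1,\dots,e_{2n}\}$, many terms involving $\xi_{(i)}$ with $i\in\{1,2,3,4\}$ interact only with their own type or with the $\mathcal C_5,\mathcal C_6$ scalars, whereas the mixed horizontal-vertical modules $\mathcal C_7,\dots,\mathcal C_{12}$ pair up in the combinations shown; representation theory ($\Lie{U}(n)$-equivariance) tells us in advance which bilinear pairings can possibly contribute a $[\lambda^{1,1}_0]$-summand, so one can predict the shape of the answer before doing the contraction and then only needs to pin down the numerical coefficients. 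The coefficients $\tfrac{n-2}{n-1}$, $\tfrac{2}{n-1}$, $\tfrac{2}{n^2}$, $\tfrac{4}{n}$ will emerge from the normalizations in the projection formulas $\alpha_{\mathbb R F} = \tfrac1n\langle\alpha,F\rangle F$ and $4\xi_{(4)X} = X_{\zeta^\perp}^\flat\otimes\theta^\sharp - \dots$, together with $(n-1)\theta = -\varphi(d^*F)^\sharp + \nabla_\zeta\eta$.

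The hard part will be step (3)–(5): correctly tracking the torsion corrections when passing from $d$ computed via $\nabla^g$ to an expression via $\nabla^{\Lie{U}(n)}$, and then cleanly separating the $[\lambda^{1,1}_0]$-component from the other three $\Lie{U}(n)$-isotypic pieces of the $4$-form $d^2F$ without losing or double-counting cross-terms. In particular one must be careful that the $\xi_{(4)}$-derivative terms enter only through $\sum_i \xi_{(4)e_i}e_i = \tfrac{n-1}{2}\theta^\sharp$ (and its $\nabla^{\Lie{U}(n)}$-derivative), which is why the lemma is stated with that combined quantity rather than with the full tensor $\xi_{(4)}$; verifying that all other appearances of $\xi_{(4)}$ cancel or recombine into this trace is the main technical checkpoint. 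Once that is in place, the remaining manipulations are the routine index gymnastics that I will not carry out here.
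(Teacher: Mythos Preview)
Your approach matches the paper's: both derive the identity from $d^2F=0$ by expressing $d^2F$ in terms of $\nabla^{\Lie{U}(n)}$ and $\xi$, contracting the resulting four-form with $F$ on two arguments to obtain a two-form, and then projecting that two-form onto its Hermitian component. One correction: the target module is the full $[\lambda^{1,1}] = \mathbb{R}F \oplus [\lambda^{1,1}_0]$, not just $[\lambda^{1,1}_0]$ as you propose in step~(5)---the lemma's right-hand side contains genuine $F(X,Y)$ summands (e.g.\ $-\tfrac{2}{n^2}d^*\eta\,d^*F(\zeta)F(X,Y)$ and the $-\tfrac{2}{n-1}(\nabla^{\Lie{U}(n)}_{e_j}(\xi_{(4)e_i}e_i)^\flat)(\varphi e_j)F(X,Y)$ term), so the $\mathbb{R}F$-trace part must be retained rather than discarded.
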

 \begin{proof}
   The proof follows in a similar way as in \cite[Lemma 4.5, page 163]{GDMC1} for other identities below. Firstly  we note that
 $ d^2 F \in  \Lambda^4T^*M $ and  $\Lambda^4T^*M$  has the following $\Lie{U}(n)$-decomposition
 \begin{eqnarray*}
\Lambda^4T^*M & = & \lcf \lambda^{4,0} \rcf  \oplus \lcf \lambda^{3,1}\rcf \oplus \lcf \lambda^{2,0}\rcf  \wedge F \oplus
  [\lambda^{2,2}_0] \oplus [\lambda^{1,1}_0]\wedge  F \oplus \mathbb R F\wedge F
  \\
  && 
  \oplus \lcf \lambda^{3,0} \rcf \wedge \eta \oplus \lcf \lambda^{2,1}_0 \rcf
\wedge \eta
   \oplus \lcf \lambda^{1,0} \rcf\wedge  F \wedge \eta.
  \end{eqnarray*}
  Then   $d^2 F$ is written in terms of $\nabla^{\Lie{U}(n)}$ and $\xi$, i.e.
  \begin{align}
  d^2F(X_1,X_2,X_3,X_4)  = & \textstyle \sum_{1 \leq a < b \leq 4} (-1)^{a+b} \left( \left((\nabla^{\Lie{U}(n)}_{X_a} \xi)_{X_b} - (\nabla^{\Lie{U}(n)}_{X_b} \xi)_{X_a}\right) F\right) (X_c, X_d) \nonumber
   \\
& 
\textstyle + \sum_{1 \leq a < b \leq 4} (-1)^{a+b}(\xi_{\xi_{X_a} X_b - \xi_{X_b} X_a}  F) (X_c, X_d) \label{d2F}
\\
&
\textstyle - \sum_{1 \leq a < b \leq 4} (-1)^{a+b}([\xi_{X_a} ,\xi_{X_b}]  F) (X_c, X_d), \nonumber
  \end{align}
 where $c < d$, $ \{c,d\} =\{1, \dots,4\}-\{a,b\}$ in each case and $[\xi_{X_a} , \xi_{X_b} ] = \xi_{X_a}  \xi_{X_b} - \xi_{X_b}  \xi_{X_a}$.  Now contracting  with $F$ on the first two arguments, it is obtained the two-form
 \begin{equation} \label{usouso}
{\small  \begin{array}{rl} & \hspace{-.9cm}  \tfrac12 F_{1\,2} (d^2
F) (X,Y) =
  \\[1mm]
& - 2 \langle ( \nabla^{\Lie{U}(n)}_{e_i} \xi )_{\varphi e_i} X,
\varphi Y \rangle
 + 2 \langle ( \nabla^{\Lie{U}(n)}_{ \varphi e_i} \xi )_X Y,   { \varphi e_i} \rangle
  - 2 \langle ( \nabla^{\Lie{U}(n)}_{\varphi  e_i} \xi )_Y X, \varphi  e_i \rangle
  + 2 \langle ( \nabla^{\Lie{U}(n)}_X \xi )_{ \varphi e_i} {\varphi e_i},  Y \rangle
\\[1mm]
  &
  - 2 \langle ( \nabla^{\Lie{U}(n)}_Y \xi )_{\varphi e_i} { \varphi e_i},  X \rangle
  - 2  \langle \xi_{ \xi_{e_i} {\varphi e_i}} X , \varphi Y \rangle
  + 2 \langle \xi_X e_i  , \xi_{e_i} Y , \rangle
  - 2  \langle  \xi_Y { e_i} , \xi_{e_i} X  \rangle
\\[1mm]
    &
    - \eta(X) \left( (\nabla^{\Lie{U}(n)}_Y \xi )_{e_i} \eta \right) (e_i)
    + \eta(Y) \left( (\nabla^{\Lie{U}(n)}_X \xi )_{e_i} \eta \right) (e_i)
 - 2 \eta \odot \left( (\nabla^{\Lie{U}(n)}_{e_i} \xi )_{\varphi e_i} \eta \right) \circ \varphi (X,Y)
\\[1mm]
&
 +  \eta(X) \left( (\nabla^{\Lie{U}(n)}_{e_i} \xi )_{Y} \eta \right) (e_i)
 +  \eta(Y) \left( (\nabla^{\Lie{U}(n)}_{e_i} \xi )_{X} \eta \right) (e_i)
 +  2 \eta(X) (\xi_{\xi_{e_i} Y} \eta) (e_i)
 \\[1mm]
 &
 + 2 (\xi_{e_i} \eta)(X) (\xi_{\varphi e_i} \eta)\circ \varphi (Y)
 - 2 (\xi_{e_i} (\xi_Y \eta))  ({e_i}) \eta(X)
 + 2 (\xi_X \eta)(Y) (\xi_{ e_i} \eta)({e_i})
 \\[1mm]
 &
 + 2 (\xi_Y \eta)({e_i}) (\xi_{\varphi e_i} \eta)\circ \varphi (X)
 + 2 (\xi_{e_i} (\xi_{\varphi e_i} \eta)) \circ \varphi (X) \eta(Y)
 -  (\xi_{ e_i} \eta)({\varphi e_i}) (\xi_X \eta)\circ \varphi (Y)
 \\[1mm]
 &
 +  (\xi_{ e_i} \eta)({\varphi e_i}) (\xi_Y \eta)\circ \varphi (X)
   -  3 (\xi_{\xi_X \zeta} \eta)(Y)
 +  3 (\xi_{\xi_Y \zeta} \eta)(X).
\end{array} }
\end{equation}
Then  taking the
corresponding projection  to $\left[\lambda^{1,1}\right]$ of this two-form ,  it is computed the  two-form which determines the $[\lambda^{1,1}]$-component  of $d^2F$.   Finally using the properties of the components $\xi_{(i)}$ of~$\xi$ and the fact $d^2F=0$,  one has the required identity. We recall that $\nabla^{\Lie{U}(n)}$ is a $\Lie{U}(n)$-connection. This fact implies $\nabla^{\Lie{U}(n)} \xi_{(i)}$ is in $\mathcal{C}_i$, $\nabla^{\Lie{U}(n)}F=0$ and $\nabla^{\Lie{U}(n)} \eta =0$. Also from $\nabla^{\Lie{U}(n)} = \nabla + \xi$, one has $\nabla F = - \xi F$, and  $\nabla \eta = - \xi \eta$.
  \end{proof}

In  previous Lemma, if we use the equality
\begin{gather*}
\textstyle    (\nabla^{\Lie{U}(n)}_X (\xi_{(4)e_i} { e_i})^\flat) (Y) - (\nabla^{\Lie{U}(n)}_Y (\xi_{(4)e_i} { e_i})^\flat) (X)  = 
\textstyle  d  (\xi_{(4)e_i} { e_i})^\flat(X,Y) -  \langle \xi_X Y - \xi_Y X , \xi_{(4)e_i} { e_i} \rangle.    
\end{gather*}
we will obtain the $[\lambda^{1,1}]$-component  of the exterior derivative of the Lee form $\theta$.
 \begin{proposition} \label{divergenciaunouno}
 For almost contact metric manifolds of dimension $2n+1$, $n>1$,  we have
 \begin{align*}
   \tfrac{n-2}{2} \textstyle  d \theta_{[\lambda^{1,1}]} ( X,  Y)    = &
     \tfrac{1}{2} \textstyle  \langle d \theta , F \rangle   F(X,Y) 
   - \textstyle  \langle (\nabla^{\Lie{U}(n)}_{e_i} \xi_{(3)})_{ X} Y,  { e_i} \rangle
    + \textstyle   \langle (\nabla^{\Lie{U}(n)}_{e_i} \xi_{(3)})_{ Y}   X, { e_i} \rangle
    \\
 &
  +  \tfrac{n-2}{2} \textstyle  \theta (\xi_{(3)  X} Y- \xi_{(3)  Y} X) 
  - \tfrac{3}{2} \textstyle  \langle \xi_{(1)X} e_i, \xi_{(2)Y} e_i \rangle 
  + \tfrac{3}{2} \textstyle  \langle \xi_{(1)Y} e_i, \xi_{(2)X} e_i \rangle     
\\
&
- \tfrac{1}{n^2} d^*\eta d^*F(\zeta) F(X,Y) 
+ \tfrac{2}{n} d^*\eta (\xi_{(7)X} \eta)(Y) 
- \tfrac{2}{n} d^*F(\zeta)    (\xi_{(8)Y}  \eta) (\varphi X) 
\\
&
+ 2 \langle   \xi_{(7)X}  \zeta, \xi_{(8)Y}  \zeta \rangle -  2 \langle   \xi_{(7)Y}  \zeta, \xi_{(8)X}  \zeta \rangle
\\
&
+ 2 \langle   \xi_{(11)\zeta }  X , \xi_{(10)Y}  \zeta \rangle -  2 \langle   \xi_{(11)\zeta }  Y, \xi_{(10)X}  \zeta \rangle
\end{align*}
and
$
 \textstyle   (d\theta)_{\mathbb{R}} ( X,  Y)  = \tfrac{1}{n}  \textstyle   \langle d \theta , F \rangle F(X,Y), 
$
where
\begin{gather*}
 \textstyle
  \langle d \theta , F \rangle 
   =
\tfrac{1}{n}   d^*\eta d^*F(\zeta) - 2  \textstyle 
  \langle   \xi_{(7) \varphi e_i }  \zeta, \xi_{(8) e_i}  \zeta \rangle  
  -  2 \textstyle    \langle \xi_{(11)\zeta }  \varphi e_i  , \xi_{(10)e_i}  \zeta \rangle.
\end{gather*}
 \end{proposition}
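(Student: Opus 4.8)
The plan is to derive Proposition~\ref{divergenciaunouno} from Lemma~\ref{lambdaunouno} by recognizing the four terms involving $\nabla^{\Lie{U}(n)}$ of $\xi_{(4)e_i}e_i$ as precisely a contraction of the exterior derivative of the Lee form. Recall that $\sum_i \xi_{(4)e_i}e_i = \tfrac{n-1}{2}\theta^\sharp$, so $(\xi_{(4)e_i}e_i)^\flat = \tfrac{n-1}{2}\theta$. First I would substitute the displayed identity
\[
\textstyle (\nabla^{\Lie{U}(n)}_X (\xi_{(4)e_i}e_i)^\flat)(Y) - (\nabla^{\Lie{U}(n)}_Y (\xi_{(4)e_i}e_i)^\flat)(X) = d(\xi_{(4)e_i}e_i)^\flat(X,Y) - \langle \xi_X Y - \xi_Y X, \xi_{(4)e_i}e_i\rangle
\]
into the first, second, fifth, and sixth terms of Lemma~\ref{lambdaunouno}, after first rewriting those terms: the combination
$\langle \nabla^{\Lie{U}(n)}_{\varphi^2 X}\xi_{(4)e_i}e_i, Y\rangle - \langle \nabla^{\Lie{U}(n)}_{\varphi^2 Y}\xi_{(4)e_i}e_i, X\rangle - \langle \nabla^{\Lie{U}(n)}_{\varphi X}\xi_{(4)e_i}e_i,\varphi Y\rangle + \langle \nabla^{\Lie{U}(n)}_{\varphi Y}\xi_{(4)e_i}e_i,\varphi X\rangle$
should be reorganized so that $\varphi^2 = -I + \eta\otimes\zeta$ lets me split off the $[\lambda^{1,1}]$-projection. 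Since $\xi_{(4)e_i}e_i$ lies in $\zeta^\perp$ and is orthogonal to $\zeta$, and since $\nabla^{\Lie{U}(n)}\eta=0$, the $\eta$-contributions from $\varphi^2$ drop out, leaving an antisymmetrized $\nabla^{\Lie{U}(n)}$-derivative that, via the substitution, becomes $\tfrac{n-1}{2}d\theta$ plus the $\xi$-torsion correction $\tfrac{n-1}{2}\langle \xi_X Y - \xi_Y X, \theta^\sharp\rangle$. The latter produces exactly the term $\tfrac{n-2}{2}\theta(\xi_{(3)X}Y - \xi_{(3)Y}X)$ once one checks — using \eqref{inttorcar} and the fact that $\theta^\sharp\in\zeta^\perp$ — that only the $\mathcal C_3$-part of $\xi$ survives the antisymmetrization paired against $\theta^\sharp$ (the $\mathcal C_1$-part is already accounted for among the $\langle\xi_{(1)},\xi_{(2)}\rangle$-type terms and other components vanish by type).

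Next I would handle the third term of Lemma~\ref{lambdaunouno}, namely $-\tfrac{2}{n-1}(\nabla^{\Lie{U}(n)}_{e_j}(\xi_{(4)e_i}e_i)^\flat)(\varphi e_j)\,F(X,Y)$. Writing $(\xi_{(4)e_i}e_i)^\flat = \tfrac{n-1}{2}\theta$, this becomes $-(\nabla^{\Lie{U}(n)}_{e_j}\theta)(\varphi e_j)F(X,Y)$; the scalar $(\nabla^{\Lie{U}(n)}_{e_j}\theta)(\varphi e_j)$ is, up to sign and the usual $\xi$-correction, the $F$-component of $d\theta$, i.e. it computes $\langle d\theta, F\rangle$ together with torsion terms $\langle \xi_{e_j}\varphi e_j, \theta^\sharp\rangle$-type contributions. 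Tracking these carefully, using that $\sum_i \xi_{e_i}\varphi e_i$ is contributed only by $\mathcal C_4$ and $\mathcal C_6$ (as recorded in the Preliminaries) and that $\theta^\sharp\in\zeta^\perp$, yields the stated formula for $\langle d\theta, F\rangle$, where the $\mathcal C_4$-self-interaction cancels and only the cross-terms $d^*\eta\, d^*F(\zeta)$, $\langle \xi_{(7)},\xi_{(8)}\rangle$ and $\langle \xi_{(11)},\xi_{(10)}\rangle$ remain; the $\mathcal C_6$-contribution $-\tfrac1n d^*F(\zeta)$ paired with $\mathcal C_5$ is what produces the $\tfrac1n d^*\eta\, d^*F(\zeta)$ monomial. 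Then the decomposition $d\theta = (d\theta)_{\mathbb R} + d\theta_{[\lambda^{1,1}]} + d\theta_{\lcf\lambda^{2,0}\rcf} + \eta\wedge(\zeta\lrcorner d\theta)$ from the Remark, together with $(d\theta)_{\mathbb R}(X,Y) = \tfrac1n\langle d\theta, F\rangle F(X,Y)$, gives the scalar part immediately, and comparing the $[\lambda^{1,1}]$-projections on both sides of the rearranged Lemma yields the main displayed equation after collecting the factor $\tfrac{n-1}{2}$ against the $\tfrac{n-2}{n-1}$ coefficients (producing the overall $\tfrac{n-2}{2}$).

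The main obstacle I expect is purely bookkeeping: matching coefficients and signs across the projection to $[\lambda^{1,1}]$, since the projection formula $2\alpha_{[\lambda^{1,1}]}(X,Y) = \alpha(\varphi^2 X,\varphi^2 Y) + \alpha(\varphi X,\varphi Y)$ mixes the $\varphi^2$- and $\varphi$-terms of Lemma~\ref{lambdaunouno} in a way that must be reconciled with the $\tfrac{n-2}{n-1}$ normalization, and one must be scrupulous about which $\xi$-torsion correction terms land in $[\lambda^{1,1}]$ versus $\mathbb R F$ versus the other summands. A secondary subtlety is verifying that the antisymmetrized torsion term $\langle \xi_X Y - \xi_Y X, \theta^\sharp\rangle$, after projection to $[\lambda^{1,1}]$, reduces cleanly to $\theta(\xi_{(3)X}Y - \xi_{(3)Y}X)$: this requires using \eqref{inttorcar} to see that $\xi_{(1)}$ (nearly-K-cosymplectic part) contributes only through the already-present quadratic $\langle\xi_{(1)},\xi_{(2)}\rangle$ terms, that $\xi_{(2)}$, $\xi_{(5)},\dots,\xi_{(12)}$ either annihilate $\zeta^\perp$-valued pairings against $\theta^\sharp$ after antisymmetrization or feed into the listed quadratic terms, and that $\xi_{(4)}$ paired against $\theta^\sharp$ contributes to the scalar $\langle d\theta,F\rangle$ piece rather than the trace-free part. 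Once these identifications are in place, the proposition follows by substitution and collection of terms, with no genuinely new computation beyond what Lemma~\ref{lambdaunouno} already supplies.
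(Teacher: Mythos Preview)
Your approach is correct and is exactly the one the paper takes: the paper states that Proposition~\ref{divergenciaunouno} follows from Lemma~\ref{lambdaunouno} by substituting the identity $(\nabla^{\Lie{U}(n)}_X (\xi_{(4)e_i}e_i)^\flat)(Y) - (\nabla^{\Lie{U}(n)}_Y (\xi_{(4)e_i}e_i)^\flat)(X) = d(\xi_{(4)e_i}e_i)^\flat(X,Y) - \langle \xi_X Y - \xi_Y X, \xi_{(4)e_i}e_i\rangle$ and then reading off the $[\lambda^{1,1}]$-component of $d\theta$. Your write-up supplies considerably more of the bookkeeping than the paper does (the paper gives no further details), and your anticipated subtleties about which $\xi_{(j)}$-components survive in the torsion correction are the right ones to track.
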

   The identity in next Lemma is also a consequence of $d^2F=0$.  
\begin{lemma} 
 For almost contact metric manifolds of dimension$2n+1$, $n>1$, the
following identity is satisfied
\vspace{-1mm}
{\rm  \small
     \begin{align*}
     0= &
      3 \textstyle  \langle (\nabla^{\Lie{U}(n)}_{e_i} \xi_{(1)} )_{e_i} X_{}, Y_{} \rangle
   - \textstyle  \langle (\nabla^{\Lie{U}(n)}_{e_i} \xi_{(3)} )_{e_i} X_{}, Y_{} \rangle
   + (n-2) \textstyle   \langle (\nabla^{\Lie{U}(n)}_{e_i} \xi_{(4)} )_{e_i} X_{},Y_{}
   \rangle
  \\
  &
  - \textstyle   \langle \xi_{{(3)}X_{}} e_i,  \xi_{{(1)} Y_{}} e_i \rangle
  + \textstyle   \langle  \xi_{{(3)}Y_{}} e_i,  \xi_{{(1)}  X_{} }e_i \rangle
 +  \frac12\textstyle   \langle  \xi_{{(3)}X_{}} e_i,  \xi_{{(2)} Y_{}} e_i \rangle
   - \textstyle   \frac12  \langle  \xi_{{(3)}Y_{}} e_i,  \xi_{{(2)}  X_{}} e_i \rangle
    \\
   &
 \textstyle   - \tfrac{n-5}{n-1}  \textstyle  \langle \xi_{{(1)} \xi_{{(4)}e_i} e_i }X_{}, Y_{} \rangle
    - \tfrac{n-2}{n-1}  \textstyle  \langle  \xi_{{(2)} \xi_{{(4)}e_i} e_i } X_{}, Y_{} \rangle
      +  \textstyle   \langle \xi_{{(3)} \xi_{(4)e_i} e_i } X_{}, Y_{} \rangle
     \\
     &
     + \textstyle  (\xi_{(6)e_i} \eta) (\varphi e_i)  \langle  \xi_ {(11)\zeta}  X_{},  \varphi Y_{} \rangle
     + (n-2) \textstyle  (\xi_{(5)e_i} \eta ) \wedge   (  \xi_{(10) e_i} \eta) 
 (  X_{},  Y_{})
       \\
     &
       + (n-2) \textstyle  (\xi_{(6)e_i} \eta ) \wedge   (  \xi_{(10) e_i} \eta) 
 (  X_{},  Y_{}) 
  - 2 \textstyle   (   \xi_{{(7)} \; e_i} \eta) \wedge (  \xi_{{(9)} \; e_i}
     \eta)(  X_{},  Y_{})
              \\
              &
   - 2 \textstyle  (   \xi_{{(7)} \; e_i} \eta) \wedge (  \xi_{{(10)} \; e_i}   \eta)(  X_{},  Y_{})
  - 2  (   \xi_{{(8)} \; e_i} \eta) \wedge (  \xi_{{(10)} \; e_i}    \eta)(  X_{},  Y_{})
      \\
 &
+ 2\textstyle  (\xi_{(7)X_{}} \eta)(\xi_{(11)\zeta} Y_{})   
- 2\textstyle  (\xi_{(7)Y_{}} \eta)(\xi_{(11)\zeta}X_{}). 
   \end{align*}
   }
  \end{lemma}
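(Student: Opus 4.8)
The plan is to argue in exact parallel with the proof of Lemma~\ref{lambdaunouno}, simply extracting a different irreducible $\Lie{U}(n)$-summand of the two-form $\tfrac12 F_{1\,2}(d^2F)$ that was computed in \eqref{usouso}. Since $d^2F=0$, every contraction and every projection of the four-form $d^2F$ vanishes, so in particular \eqref{usouso} is identically zero. By the Remark, this two-form lies in $\mathbb R F\oplus[\lambda_0^{1,1}]\oplus\lcf\lambda^{2,0}\rcf\oplus\eta\wedge\lcf\lambda^{1,0}\rcf$; Lemma~\ref{lambdaunouno} and Proposition~\ref{divergenciaunouno} exhausted its $[\lambda^{1,1}]=\mathbb R F\oplus[\lambda_0^{1,1}]$ part, and the identity asserted here is precisely what the vanishing of its $\lcf\lambda^{2,0}\rcf$-component (equivalently, of the $\lcf\lambda^{2,0}\rcf\wedge F$-component of $d^2F$) gives.

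First I would apply the projection $\alpha\mapsto\alpha_{\lcf\lambda^{2,0}\rcf}$, with $\alpha_{\lcf\lambda^{2,0}\rcf}(X,Y)=\tfrac12\bigl(\alpha(\varphi^2 X,\varphi^2 Y)-\alpha(\varphi X,\varphi Y)\bigr)$, to the right-hand side of \eqref{usouso} and set the result to $0$. Because $\eta$ vanishes on both $\varphi(\cdot)$ and $\varphi^2(\cdot)$, every term of \eqref{usouso} in which $\eta$ is paired with $X$ or with $Y$ (this includes the $\eta\odot(\cdots)$ term) is annihilated by the projection; and, since the $[\lambda^{1,1}]$-part of a two-form is also killed by it, everything already accounted for in Lemma~\ref{lambdaunouno} drops out as well. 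What remains is the genuinely anti-Hermitian skew part of \eqref{usouso}, which then has to be rewritten term by term.

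To reduce those surviving terms I would substitute $\xi=\sum_{i=1}^{12}\xi_{(i)}$ and use only the standard facts invoked in the proof of Lemma~\ref{lambdaunouno}: $\nabla^{\Lie{U}(n)}$ is a $\Lie{U}(n)$-connection, so $\nabla^{\Lie{U}(n)}F=0$, $\nabla^{\Lie{U}(n)}\eta=0$ and $\nabla^{\Lie{U}(n)}\xi_{(i)}\in\mathcal C_i$; the structural relation \eqref{inttorcar}; the identities $\nabla F=-\xi F$, $\nabla\eta=-\xi\eta$; and, component by component, the $\varphi$-(anti)commutation type of $\xi_{(i)}$ and the (anti)symmetry of $(X,Y)\mapsto\xi_{(i)X}Y$. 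The $e_i$-contractions would be handled with the identities of Section~\ref{sect:prelima}: after relabelling $\varphi e_i\leftrightarrow e_i$ and using $\nabla^{\Lie{U}(n)}F=\nabla^{\Lie{U}(n)}\eta=0$, the $\nabla$-linear terms of \eqref{usouso} collapse to the divergence terms $\sum_i(\nabla^{\Lie{U}(n)}_{e_i}\xi_{(j)})_{e_i}$, which survive only for $j=1,3,4$ (for the other components the relevant contraction has no $\lcf\lambda^{2,0}\rcf$-part, being proportional to $\zeta$ or carrying an $\eta$-factor); the quadratic terms produce $\sum_i\xi_{(4)e_i}e_i$, the Lee-type vector of \eqref{leeac}, whence the terms $\xi_{(j)\xi_{(4)e_i}e_i}$, together with $\sum_i(\xi_{(6)e_i}\eta)(\varphi e_i)=-d^*F(\zeta)$ and the various contractions $\langle\xi_{(a)X}e_i,\xi_{(b)Y}e_i\rangle$ and $(\xi_{(a)e_i}\eta)\wedge(\xi_{(b)e_i}\eta)$.

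The hard part is not conceptual but combinatorial: it is the bookkeeping that pins down exactly which of these cross-terms survive and with which rational coefficients --- in particular the factors $\tfrac{n-5}{n-1}$ and $\tfrac{n-2}{n-1}$, the $3$ multiplying the $\nabla^{\Lie{U}(n)}\xi_{(1)}$ term, and the $\tfrac12$'s on the terms quadratic in $\xi_{(2)}$ and $\xi_{(3)}$. This requires the detailed $\Lie{U}(n)$-module data for $\otimes^2\eta^\perp$ and for the twelve modules $\mathcal C_i$ from \cite{ChineaGonzalezDavila} and \cite{ChineaJCMarr1} --- which pairwise contractions of components vanish, and how each quadratic expression splits into its $\varphi$-commuting and $\varphi$-anticommuting pieces --- along with the dimension-dependent normalisations attached to $\mathcal C_4$. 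Once these are in place, collecting terms yields the stated identity.
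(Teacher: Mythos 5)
Your proposal follows exactly the paper's own route: write $d^2F$ in terms of $\nabla^{\Lie{U}(n)}$ and $\xi$ as in \eqref{d2F}, contract with $F$ on the first two arguments to obtain the two-form \eqref{usouso}, project onto $\lcf\lambda^{2,0}\rcf$, and use $d^2F=0$ together with the componentwise properties of the $\xi_{(i)}$ to extract the identity. The level of detail you supply (and the bookkeeping you defer) matches what the paper itself does, which likewise refers to \cite[Lemma 4.5]{GDMC1} for the term-by-term reduction.
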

  \begin{proof}
   The proof  follows in a similar way as in Lemma \ref{lambdaunouno}.    
   We firstly consider   $d^2 F$ written   in terms of $\nabla^{\Lie{U}(n)}$ and $\xi$, i.e. \eqref{d2F}. Then contracting  with $F$ on the first two arguments, it is obtained the two-form $\eqref{usouso}$. Finally, we  will  compute the $\lcf \lambda^{2,0}\rcf$-component of such a two-form, which  determines the $\lcf \lambda^{2,0}\rcf$-component  of $d^2F$. As before, the required identity follows by using the properties of the components $\xi_{(i)}$ of $\xi$ and the fact $d^2F=0$.  The identity displayed here, with the additional assumptions $\xi_{(11)}=0$ and  $\xi_{(12)}=0$, was already proved in 
    \cite[Lemma 4.5]{GDMC1} where other  additional details can be found. 
  \end{proof}
 
In  previous Lemma, if we use the identity 
 $$ 
 \textstyle (n-1)   \langle (\nabla^{\Lie{U}(n)}_{e_i} \xi_{(4)} )_{e_i} X_{},Y_{} \rangle   
=
  \textstyle ( d (\xi_{(4)e_i} e_i)^\flat)_{\lcf \lambda^{2,0} \rcf} (X,Y)  - 2  \langle \xi_{(1) \xi_{(4)e_i} e_i} X, Y \rangle 
  +   \textstyle  \langle \xi_{(2) \xi_{(4)e_i} e_i} X, Y \rangle,  
  $$
we will obtain   the $\lcf \lambda^{2,0} \rcf$-component of the exterior derivative of the Lee form $\theta$.
\begin{proposition} \label{divergenciadoscero}
  For almost contact metric manifolds of dimension $2n+1$, $n>1$, the
following identity is satisfied
\begin{equation*}
{\rm  \small
     \begin{array}{rl}
           \tfrac{n-2}{2} \textstyle  d\theta_{\lcf \lambda^{2,0} \rcf} (X,Y) 
     = &
      - 3 \textstyle  \langle (\nabla^{\Lie{U}(n)}_{e_i} \xi_{(1)} )_{e_i} X_{}, Y_{} \rangle
   + \textstyle  \langle (\nabla^{\Lie{U}(n)}_{e_i} \xi_{(3)} )_{e_i} X_{}, Y_{} \rangle
  + \textstyle   \langle \xi_{{(3)}X_{}} e_i,  \xi_{{(1)} Y_{}} e_i \rangle
    \\[2mm]
&
  - \textstyle   \langle  \xi_{{(3)}Y_{}} e_i,  \xi_{{(1)}  X_{} }e_i \rangle
  -  \frac12\textstyle   \langle  \xi_{{(3)}X_{}} e_i,  \xi_{{(2)} Y_{}} e_i \rangle
   + \textstyle  \frac12    \langle  \xi_{{(3)}Y_{}} e_i,  \xi_{{(2)}  X_{}} e_i \rangle
    \\[2mm]
   &
 \textstyle  
  + \tfrac{3(n-3)}{2}  \textstyle  \langle \xi_{{(1)}
 \theta^\sharp }X_{}, Y_{} \rangle
        - \textstyle \frac{n-1}{2}  \langle \xi_{{(3)} 
      \theta^\sharp } X_{}, Y_{} \rangle
  
     - \textstyle d^*F(\zeta)  
      \langle  \xi_ {(11)\zeta}  X_{},  \varphi Y_{} \rangle
        \\[2mm]
     &
          - \tfrac{n-2}{2n} d^* \eta     (  \xi_{(10)  X_{}} \eta)  (   Y_{})
         + \tfrac{n-2}{n} d^* F (\zeta) \textstyle     (  \xi_{(10) \varphi X} \eta)  ( \varphi   Y_{}) 
   \\[2mm]
     &
  + 2 \textstyle   (   \xi_{{(7)} \; e_i} \eta) \wedge (  \xi_{{(9)} \; e_i}
     \eta)(  X_{},  Y_{})
   + 2 \textstyle   (   \xi_{{(7)} \; e_i} \eta) \wedge (  \xi_{{(10)} \; e_i}   \eta)(  X_{},  Y_{})
      \\[2mm]
     &
  + 2  (   \xi_{{(8)} \; e_i} \eta) \wedge (  \xi_{{(10)} \; e_i}    \eta)(  X_{},  Y_{})
- 2\textstyle  (\xi_{(7)X_{}} \eta)(\xi_{(11)\zeta} Y_{})   
+ 2\textstyle  (\xi_{(7)Y_{}} \eta)(\xi_{(11)\zeta}X_{}). 
   \end{array}}
  \end{equation*}
  \end{proposition}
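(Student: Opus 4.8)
The plan is to follow the same template used to establish Lemma~\ref{lambdaunouno} and Proposition~\ref{divergenciaunouno}, but now projecting the contracted two-form \eqref{usouso} onto the component $\lcf \lambda^{2,0} \rcf$ rather than $[\lambda^{1,1}]$. Concretely, I would start from the identity in the preceding Lemma (the $\lcf \lambda^{2,0}\rcf$-version of the contraction of $d^2F$ with $F$), which is already organized in terms of $\nabla^{\Lie{U}(n)}$-derivatives of the components $\xi_{(1)}$, $\xi_{(3)}$, $\xi_{(4)}$ and of quadratic pairings of the $\xi_{(i)}$'s. The only work is to replace the term $(n-1)\langle(\nabla^{\Lie{U}(n)}_{e_i}\xi_{(4)})_{e_i}X,Y\rangle$ using the displayed identity
$$
(n-1)\langle(\nabla^{\Lie{U}(n)}_{e_i}\xi_{(4)})_{e_i}X,Y\rangle
= (d(\xi_{(4)e_i}e_i)^\flat)_{\lcf\lambda^{2,0}\rcf}(X,Y) - 2\langle\xi_{(1)\xi_{(4)e_i}e_i}X,Y\rangle + \langle\xi_{(2)\xi_{(4)e_i}e_i}X,Y\rangle,
$$
and then to substitute $\xi_{(4)e_i}e_i = \tfrac{n-1}{2}\theta^\sharp$ (from the relation $\sum_i\xi_{(4)e_i}e_i = \tfrac{n-1}{2}\theta$ recorded in the Preliminaries) so that $d(\xi_{(4)e_i}e_i)^\flat = \tfrac{n-1}{2}\,d\theta$ and hence $(d(\xi_{(4)e_i}e_i)^\flat)_{\lcf\lambda^{2,0}\rcf} = \tfrac{n-1}{2}\,d\theta_{\lcf\lambda^{2,0}\rcf}$.

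The bookkeeping then proceeds as follows. First I would isolate $d\theta_{\lcf\lambda^{2,0}\rcf}$ on the left, which after multiplying through produces the overall factor $\tfrac{n-2}{2}$ appearing in the statement: the coefficient $(n-2)$ in front of the $\xi_{(4)}$-derivative in the Lemma, divided by the $(n-1)$ from the substitution and multiplied by the $\tfrac{n-1}{2}$ from $\xi_{(4)e_i}e_i = \tfrac{n-1}{2}\theta^\sharp$, yields $\tfrac{n-2}{2}$. Next I would collect the remaining terms. The $\nabla^{\Lie{U}(n)}\xi_{(1)}$ and $\nabla^{\Lie{U}(n)}\xi_{(3)}$ contributions carry over directly with signs flipped (since they move to the right-hand side). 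The quadratic terms $\langle\xi_{(3)}X e_i,\xi_{(1)}Ye_i\rangle$, $\langle\xi_{(3)}Xe_i,\xi_{(2)}Ye_i\rangle$ and their $X\leftrightarrow Y$ swaps likewise transfer. The two terms coming from the $\xi_{(4)e_i}e_i$ substitution, namely $-2\langle\xi_{(1)\theta^\sharp}X,Y\rangle$ and $\langle\xi_{(2)\theta^\sharp}X,Y\rangle$ (up to the scaling factor $\tfrac{n-1}{2}$), combine with the already-present terms $-\tfrac{n-5}{n-1}\langle\xi_{(1)\xi_{(4)e_i}e_i}X,Y\rangle$, $-\tfrac{n-2}{n-1}\langle\xi_{(2)\xi_{(4)e_i}e_i}X,Y\rangle$, $\langle\xi_{(3)\xi_{(4)e_i}e_i}X,Y\rangle$ from the Lemma; after expressing everything in terms of $\theta^\sharp$ and adding coefficients, the $\xi_{(1)}$-term should consolidate to $\tfrac{3(n-3)}{2}\langle\xi_{(1)\theta^\sharp}X,Y\rangle$, the $\xi_{(2)}$-terms should cancel, and the $\xi_{(3)}$-term to $-\tfrac{n-1}{2}\langle\xi_{(3)\theta^\sharp}X,Y\rangle$. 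Finally the $\eta$-wedge terms involving $\xi_{(6)},\xi_{(5)},\xi_{(10)},\xi_{(7)},\xi_{(8)},\xi_{(9)},\xi_{(11)}$ in the Lemma must be rewritten: the terms $(\xi_{(5)e_i}\eta)\wedge(\xi_{(10)e_i}\eta)$ and $(\xi_{(6)e_i}\eta)\wedge(\xi_{(10)e_i}\eta)$ are re-expressed using $\sum_i\xi_{(5)e_i}\eta$ and $\sum_i\xi_{(6)e_i}\eta$ in terms of $d^*\eta$ and $d^*F(\zeta)$ (the Preliminaries give $\sum_i\xi_{(5)e_i}e_i = -d^*\eta\,\zeta$ and $\sum_i\xi_{(6)e_i}\varphi e_i = -d^*F(\zeta)\zeta$, and $\xi_{(5)},\xi_{(6)}$ act on $\eta$ through the characterization \eqref{inttorcar}), producing the $d^*\eta\,(\xi_{(10)X}\eta)(Y)$ and $d^*F(\zeta)\,(\xi_{(10)\varphi X}\eta)(\varphi Y)$ terms with coefficients $\tfrac{n-2}{2n}$ and $\tfrac{n-2}{n}$; the $\xi_{(6)}$-term with $\xi_{(11)}$ becomes $-d^*F(\zeta)\langle\xi_{(11)\zeta}X,\varphi Y\rangle$; and the $\xi_{(7)},\xi_{(8)},\xi_{(9)},\xi_{(10)},\xi_{(11)}$ pairings pass through with the sign change.

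The main obstacle I anticipate is purely computational: getting all the numerical coefficients to match after the three-way merge of the $\xi_{(1)\theta^\sharp}$, $\xi_{(2)\theta^\sharp}$, $\xi_{(3)\theta^\sharp}$ terms, since these pick up contributions from (a) the Lemma's pre-existing $\xi_{(j)\xi_{(4)e_i}e_i}$ terms, (b) the substitution $(n-1)(\nabla^{\Lie{U}(n)}\xi_{(4)})\to\ldots$, and (c) possibly from re-symmetrizing the $\xi_{(3)}$-pairings; the rational functions of $n$ involved ($\tfrac{n-5}{n-1}$, $\tfrac{n-2}{n-1}$, etc.) must combine, after clearing the $(n-1)$ in the denominator against the $\tfrac{n-1}{2}$ scaling, into the clean polynomials $\tfrac{3(n-3)}{2}$ and $-\tfrac{n-1}{2}$ and an exact cancellation for $\xi_{(2)}$. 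A secondary subtlety is the treatment of the $\mathbb{R}F$-part: since $\lcf\lambda^{2,0}\rcf$ contains no $\mathbb{R}F$ summand, the scalar $\langle d\theta,F\rangle$ should not appear here, and I would verify that the projection $(\cdot)_{\lcf\lambda^{2,0}\rcf}$ annihilates every $F(X,Y)$-proportional term that showed up in Proposition~\ref{divergenciaunouno}; this is automatic from the projection formula $2\alpha_{\lcf\lambda^{2,0}\rcf}(X,Y) = \alpha(\varphi^2X,\varphi^2Y) - \alpha(\varphi X,\varphi Y)$ applied to $F$, which vanishes, so no new scalar identity is needed. Everything else is a routine (if lengthy) expansion identical in spirit to \cite[Lemma 4.5]{GDMC1}.
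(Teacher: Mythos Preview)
Your proposal is correct and follows exactly the paper's approach: take the preceding Lemma (the $\lcf\lambda^{2,0}\rcf$-part of the contracted identity $d^2F=0$), substitute the displayed relation $(n-1)\langle(\nabla^{\Lie{U}(n)}_{e_i}\xi_{(4)})_{e_i}X,Y\rangle = (d(\xi_{(4)e_i}e_i)^\flat)_{\lcf\lambda^{2,0}\rcf}(X,Y) - 2\langle\xi_{(1)\xi_{(4)e_i}e_i}X,Y\rangle + \langle\xi_{(2)\xi_{(4)e_i}e_i}X,Y\rangle$, and then use $\xi_{(4)e_i}e_i = \tfrac{n-1}{2}\theta^\sharp$ together with the explicit formulas for $\xi_{(5)}\eta$, $\xi_{(6)}\eta$ in terms of $d^*\eta$, $d^*F(\zeta)$. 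Your coefficient bookkeeping (the merge of the $\xi_{(1)\theta^\sharp}$, $\xi_{(2)\theta^\sharp}$, $\xi_{(3)\theta^\sharp}$ terms into $\tfrac{3(n-3)}{2}$, $0$, $-\tfrac{n-1}{2}$) is in fact more detailed than what the paper provides.
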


  Next we give a third   consequence of $d^2 F=0$.  
   \begin{lemma} \label{etameanid1}
 For almost contact metric manifolds of dimension $2n+1$,  the following
identity is satisfied
  \begin{eqnarray*}
0 & = &
    - \textstyle \langle (\nabla^{\Lie{U}(n)}_{\zeta} \xi_{(4)})_{e_i} { e_i},  X \rangle
 - (n-1)   \textstyle   ((\nabla^{\Lie{U}(n)}_{e_i} \xi_{(5)})_{e_i} {\eta})  (X)
      +  \textstyle    ((\nabla^{\Lie{U}(n)}_{e_i} \xi_{(8)})_{e_i} {\eta}) (X)
      \\
      &&
        - \textstyle     ((\nabla^{\Lie{U}(n)}_{e_i} \xi_{(10)})_{ e_i} {\eta}) (X)
   + \textstyle    \langle (\nabla^{\Lie{U}(n)}_{e_i} \xi_{(11)})_{\zeta}   { e_i} , X \rangle
               -  \textstyle   (\xi_{(8)e_i} {\eta})(\xi_{(3)e_i} X)
               \\
               &&
              -  \textstyle    (\xi_{(7)e_i} {\eta})(\xi_{(3)e_i} X) 
       + \textstyle    (\xi_{(10)e_i} {\eta})(\xi_{(1)X} e_i)
              -	 \tfrac12  \textstyle   (\xi_{(10)e_i} {\eta})(\xi_{(2)X} e_i)
         \\
         &&
           +  \textstyle    \langle  \xi_{(11)\zeta}   {e_i} , \xi_{(1)X}  e_i \rangle 
 - \textstyle   \tfrac12    \langle  \xi_{(11)\zeta}   {e_i} , \xi_{(2)X}  e_i \rangle 
        +  \textstyle     (\xi_{(5)   \xi_{(4)e_i} { e_i}} \eta ) (X)
        \\
        &&
        - \tfrac{1}{n-1} \textstyle    (\xi_{(8)   \xi_{(4)e_i} { e_i}} \eta ) (X)
         + \textstyle    \langle  (\xi_ {(9)\xi_{(4)e_i} {e_i}}  {\eta}) (X) 
   - \textstyle    \langle  (\xi_ {(6)\xi_{(4)e_i} {e_i}}  {\eta}) (X) 
   \\
   &&
     - \textstyle  
      (\xi_{(7)   \xi_{(4)e_i} { e_i}} \eta ) (X)
- (n-1) \textstyle     (\xi_{(5)\xi_\zeta \zeta} \eta)(X)
- \textstyle   ( \xi_{(10) \xi_{\zeta} \zeta} \eta)(X)
     - \textstyle   \langle  \xi_{(11)\zeta}   X , \xi_\zeta \zeta  \rangle.
       \end{eqnarray*}      
\end{lemma}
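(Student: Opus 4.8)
The plan is to mimic the structure of the proof of Lemma \ref{lambdaunouno}, but now extracting a different irreducible component of the identity $d^2F = 0$. The starting point is the same: write $d^2F$ in terms of the $\Lie{U}(n)$-connection $\nabla^{\Lie{U}(n)}$ and the intrinsic torsion $\xi$ via \eqref{d2F}, and then contract with $F$ on two of the four slots to obtain the two-form \eqref{usouso}, which we denote $\tfrac12 F_{1\,2}(d^2F)$. Since $d^2F = 0$, every $\Lie{U}(n)$-irreducible component of this two-form vanishes separately. The identity asserted in Lemma \ref{etameanid1} is a one-form in $X$, so it is the $\eta \hook$-part of \eqref{usouso}, i.e. one should set $Y = \zeta$ (equivalently, project \eqref{usouso} onto $\eta \wedge \lcf\lambda^{1,0}\rcf$ and identify the resulting one-form). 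Concretely, I would compute $\tfrac12 F_{1\,2}(d^2F)(X,\zeta)$ directly from \eqref{usouso}, using $\eta(\zeta)=1$, $\varphi\zeta = 0$, $\xi_X\zeta = -\nabla_X\zeta$ is $\zeta^\perp$-valued, and the characterizing property \eqref{inttorcar}.

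The bulk of the work is then bookkeeping: substitute $\xi = \sum_i \xi_{(i)}$ and discard, in each term, all pairings that vanish for $\Lie{U}(n)$-module reasons. The relevant facts are those recalled at the end of the proof of Lemma \ref{lambdaunouno}: $\nabla^{\Lie{U}(n)}$ is a $\Lie{U}(n)$-connection, so $\nabla^{\Lie{U}(n)}\xi_{(i)} \in \mathcal{C}_i$, $\nabla^{\Lie{U}(n)}F = 0$, $\nabla^{\Lie{U}(n)}\eta = 0$; and $\nabla F = -\xi F$, $\nabla\eta = -\xi\eta$. One also needs the membership of each $\xi_{(i)}$ in the corresponding module $\mathcal{C}_i$ — in particular which of $\xi_{(i)X}\zeta$, $\xi_{(i)\zeta}X$, $\xi_{(i)X}Y$ are nonzero and their symmetry/Hermiticity type — exactly the data tabulated in \cite{ChineaGonzalezDavila}. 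The terms of \eqref{usouso} split into: (a) first-derivative terms $\langle(\nabla^{\Lie{U}(n)}_{e_i}\xi)_{\ast}\ast,\ast\rangle$ which, upon projecting to the relevant summand and tracing, survive only for $\xi_{(4)}$ (the $\nabla_\zeta$ term), $\xi_{(5)}$, $\xi_{(8)}$, $\xi_{(10)}$, $\xi_{(11)}$; (b) the $\xi_{\xi_\ast\ast}F$-type terms, which after contraction and projection produce the pairings of $\xi_{(4)e_i}e_i$ (and $\xi_\zeta\zeta$) against the various $\xi_{(i)}$; and (c) the $[\xi,\xi]F$ quadratic terms giving the $(\xi_{(i)e_i}\eta)(\xi_{(j)X}e_i)$ and $\langle\xi_{(11)\zeta}e_i,\xi_{(j)X}e_i\rangle$ contributions. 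Matching constants (the $\tfrac12$, $-\tfrac1{n-1}$, $n-1$, etc.) against the asserted formula is the one genuinely fiddly part.

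I would organize the computation by first recording, once and for all, the vanishing pattern $\xi_{(i)e_i}e_i \ne 0$ only for $i \in \{4,5,12\}$ (from \eqref{sum4512}), and $\xi_{(i)e_i}\varphi e_i \ne 0$ only for $i \in \{4,6\}$, together with $\sum_i\xi_{(5)e_i}e_i = -d^*\eta\,\zeta$, $\sum_i\xi_{(12)e_i}e_i = -\nabla_\zeta\zeta$, $\sum_i\xi_{(4)e_i}e_i = -\tfrac12\varphi(d^*F)^\sharp + \tfrac12\nabla_\zeta\zeta$ from the Preliminaries; these immediately collapse many of the traced terms. Then I would treat the three groups (a), (b), (c) above in turn. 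Note that, as remarked for the previous Lemma, the case $\xi_{(11)} = \xi_{(12)} = 0$ of this identity is essentially in \cite[Lemma 4.5]{GDMC1}, so the new content is exactly the terms involving $\xi_{(11)}$ and $\xi_{(12)}$ (the latter entering only through $\nabla_\zeta\zeta = -\sum_i\xi_{(12)e_i}e_i$, i.e. through the $\xi_\zeta\zeta$ appearing in the last three lines); this lets me import most of the older computation and only carefully track how the $\eta\otimes\lie{u}(n)^\perp_{|\zeta^\perp}$ and $\eta\otimes\eta^\perp\wedge\eta$ pieces of $\xi$ feed into \eqref{usouso}. The main obstacle is purely computational stamina: correctly carrying the numerical coefficients through the contraction with $F$, the projection onto $\eta\wedge\lcf\lambda^{1,0}\rcf$, and the repeated use of \eqref{inttorcar} — there is no conceptual difficulty beyond what is already present in Lemma \ref{lambdaunouno}, only more terms to track because $\xi_{(11)}$ and $\xi_{(12)}$ are now switched on.
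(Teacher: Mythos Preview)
Your approach is exactly the paper's: write $d^2F=0$ via \eqref{d2F}, contract with $F$ to obtain \eqref{usouso}, and then extract the $\lcf\lambda^{1,0}\rcf$-component (equivalently, set $Y=\zeta$) to get the claimed one-form identity, using the module properties of each $\xi_{(i)}$ to prune the terms. One small correction: the relevant prior computation in \cite{GDMC1} is Lemma~4.6 (page~165), not Lemma~4.5, and the paper notes that some coefficients there needed fixing---so you should not import the old constants wholesale but re-derive them.
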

 \begin{proof}
   It follows in a similar way  as in the proof of Lemma  \ref{lambdaunouno}.
     The form  $d^2 F$ is written in terms of $\nabla^{\Lie{U}(n)}$ and $\xi$, i.e.  \eqref{d2F}. Then, doing  a contraction by $F$, it is obtained the two-form $\eqref{usouso}$.    Since $\lcf \lambda^{1,0}\rcf$-part of $d^2F$ is determined by $\lcf \lambda^{1,0}\rcf$-component of such a two-form, this component  vanishes because $d^2F=0$ and the required identity is obtained.  Such an identity, with the additional assumptions $\xi_{(11)}=0$ and $\xi_{(12)}=0$, was already showed in  
     \cite[Lemma 4.6, page 165]{GDMC1}. However,  we have noted some mistakes in the computation there.  This is the reason why there are  differences with the first  identity  given there in the mentioned Lemma 4.6.     
   \end{proof}  
Next by  noting  that  $ (\nabla^{\Lie{U}(n)}_X \xi_{(4)})_{e_i} e_i =  \nabla^{\Lie{U}(n)}_X \xi_{(4) e_i} e_i$ and using the identities   
\begin{gather*}
(\xi_{(5)X} \eta) (Y_{})  =  \tfrac{d^*\eta}{2n}( \langle X_{} , Y_{} \rangle - \eta(X) \eta(Y)), 
\quad (\xi_{(6)X} \eta) (Y)  =  -\tfrac{d^*F(\zeta)}{2n} F( X, Y),
\\
( (\nabla^{\Lie{U}(n)}_X  \xi_{(5)})_Y \eta) (Z_{}) = \tfrac{d(d^*\eta)(X)}{2n}( \langle Y_{} , Z_{} \rangle - \eta(Y) \eta(Z)), \\
\textstyle \langle (\nabla^{\Lie{U}(n)}_{\zeta} \xi_{(4)})_{ e_i} { e_i},  X \rangle = d (\xi_{(4)e_i} { e_i})^\flat (\zeta , X) - \langle \xi_{(11)\zeta} X, \xi_{(4)e_i} { e_i} \rangle - (\xi_{\varphi^2 X} \eta)(\xi_{(4)e_i} { e_i}),
 \end{gather*}
  another version  of the  identity in the previous Lemma  is given  in next Proposition. Such a version relates the exterior derivatives of the Lee form   $\theta$ and the coderivative $d^*\eta$.
\begin{proposition}   \label{divergencia}
For almost contact metric manifolds of dimension $2n+1$,  we have 
  \begin{align*}
 		     \textstyle  \tfrac{n-1}{2} d \theta (\zeta , X)
		     		       = &
          \tfrac{n-1}{2n} d(d^* \eta) (\varphi^2 X) 
         +  \textstyle    ((\nabla^{\Lie{U}(n)}_{e_i} \xi_{(8)})_{e_i} {\eta}) (X)
        - \textstyle      ((\nabla^{\Lie{U}(n)}_{e_i} \xi_{(10)})_{ e_i} {\eta}) (X)
 \\
   &
   + \textstyle    \langle (\nabla^{\Lie{U}(n)}_{e_i} \xi_{(11)})_{\zeta}   { e_i} , X \rangle     
                  -  \textstyle   (\xi_{(7)e_i} {\eta})(\xi_{(3)e_i} X)   
               -  \textstyle   (\xi_{(8)e_i} {\eta})(\xi_{(3)e_i} X)
                                  \\
& 
       + \textstyle    (\xi_{(10)e_i} {\eta})(\xi_{(1)X} e_i)
              -	\textstyle  \tfrac12      (\xi_{(10)e_i} {\eta})(\xi_{(2)X} e_i)
           +  \textstyle    \langle  \xi_{(11)\zeta}   {e_i} , \xi_{(1)X}  e_i \rangle 
           \\
           &
 - \textstyle  \tfrac12    \langle  \xi_{(11)\zeta}   {e_i} , \xi_{(2)X}  e_i \rangle 
        - \tfrac{n}{2} \textstyle    (\xi_{(8) 
        \theta^\sharp } \eta ) (X) 
         + \textstyle    \frac{n-1}{2}\langle  (\xi_ {(10)
           \theta^\sharp }  {\eta}) (X) 
         \\
         &
          +  \textstyle  \tfrac{n-1}{2}  
          \theta( \xi_{(11)\zeta}   X) 
- \tfrac{n-1}{2n} d^*\eta (\xi_\zeta \eta)(X) 
- \textstyle   ( \xi_{(10) \xi_{\zeta} \zeta} \eta)(X)
     - \textstyle   \langle  \xi_{(11)\zeta}   X , \xi_\zeta \zeta  \rangle.
       \end{align*}      
   In particular, if the  almost contact metric structure is  of type $\mathcal{C}_1 \oplus \mathcal{C}_2 \oplus \mathcal{C}_3 \oplus    \mathcal{C}_5    \oplus \mathcal{C}_6 \oplus \mathcal{C}_9 \oplus  \mathcal{C}_{12} $ or $\mathcal{C}_1 \oplus \mathcal{C}_2 \oplus \mathcal{C}_5 \oplus    \mathcal{C}_6    \oplus \mathcal{C}_7 \oplus \mathcal{C}_9 \oplus  \mathcal{C}_{12} $  and  $n>1$,  then  $d (d^* \eta)$ is given  by 
   $d (d^* \eta) = - d^*\eta \,\xi_\zeta \eta + d (d^* \eta)(\zeta) \eta $. Likewise,  for the type 
   $\mathcal{C}_1 \oplus \mathcal{C}_2 \oplus \mathcal{C}_3 \oplus    \mathcal{C}_5     \oplus \mathcal{C}_9 \oplus  \mathcal{C}_{12} $ and $n > 1$,   the one-form $\mathrm{div } (\zeta) \; \eta = - d^*\eta \, \eta$ is closed.
\end{proposition}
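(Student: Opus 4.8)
The plan is to derive the final statement of Proposition~\ref{divergencia} as a direct specialization of the main displayed identity in that same Proposition, followed by the auxiliary consequences of Lemma~\ref{lambdaunouno} and Proposition~\ref{divergenciaunouno}. First I would record which torsion components survive under each hypothesis. For the type $\mathcal{C}_1 \oplus \mathcal{C}_2 \oplus \mathcal{C}_3 \oplus \mathcal{C}_5 \oplus \mathcal{C}_6 \oplus \mathcal{C}_9 \oplus \mathcal{C}_{12}$ one sets $\xi_{(4)}=\xi_{(7)}=\xi_{(8)}=\xi_{(10)}=\xi_{(11)}=0$, so in particular $\theta=0$ (since $(n-1)\theta = -\varphi(d^*F)^\sharp + \nabla_\zeta\eta$ is built from $\xi_{(4)}$, which vanishes — here I would first check, using \eqref{leeac} and the sentence identifying $\mathcal C_4$, that $\theta$ indeed depends only on $\xi_{(4)}$). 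Then the left-hand side $\tfrac{n-1}{2} d\theta(\zeta, X)$ vanishes, and on the right-hand side every term containing $\xi_{(4)}, \xi_{(7)}, \xi_{(8)}, \xi_{(10)}, \xi_{(11)}$ drops. What remains is $0 = \tfrac{n-1}{2n} d(d^*\eta)(\varphi^2 X) - \tfrac{n-1}{2n} d^*\eta\,(\xi_\zeta\eta)(X)$, i.e. $d(d^*\eta)(\varphi^2 X) = d^*\eta\,(\xi_\zeta\eta)(X)$ for all $X$.

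The remaining work is to convert this into the stated closed form $d(d^*\eta) = -d^*\eta\,\xi_\zeta\eta + d(d^*\eta)(\zeta)\,\eta$. Since $\varphi^2 X = -X + \eta(X)\zeta$, the identity reads $-d(d^*\eta)(X) + d(d^*\eta)(\zeta)\eta(X) = d^*\eta\,(\xi_\zeta\eta)(X)$, which rearranges to exactly the claim — provided one also knows $(\xi_\zeta\eta)(\zeta)=0$, so that evaluating at $X=\zeta$ is consistent (this follows since $\xi_\zeta \in \un(n)^\perp$ and such endomorphisms satisfy $\langle \xi_\zeta\zeta,\zeta\rangle=0$, equivalently $(\xi_\zeta\eta)(\zeta)=0$, by skew-symmetry). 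For the second type $\mathcal{C}_1 \oplus \mathcal{C}_2 \oplus \mathcal{C}_5 \oplus \mathcal{C}_6 \oplus \mathcal{C}_7 \oplus \mathcal{C}_9 \oplus \mathcal{C}_{12}$ the argument is identical: the surviving vanishing components are still $\xi_{(4)}=\xi_{(8)}=\xi_{(10)}=\xi_{(11)}=0$ and $\xi_{(3)}=0$, so $\theta=0$ again, and the term $(\xi_{(7)e_i}\eta)(\xi_{(3)e_i}X)$ and $(\xi_{(8)e_i}\eta)(\xi_{(3)e_i}X)$ both vanish (the first because $\xi_{(3)}=0$), leaving the same reduced identity and hence the same conclusion.

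For the last claim — type $\mathcal{C}_1 \oplus \mathcal{C}_2 \oplus \mathcal{C}_3 \oplus \mathcal{C}_5 \oplus \mathcal{C}_9 \oplus \mathcal{C}_{12}$ with $n>1$, asserting $\operatorname{div}(\zeta)\,\eta = -d^*\eta\,\eta$ is closed — I would note first that $d^*\eta = -\operatorname{div}(\zeta)$, so the one-form in question is $d^*\eta\,\eta$, and compute $d(d^*\eta\,\eta) = d(d^*\eta)\wedge\eta + d^*\eta\,d\eta$. Here $\mathcal{C}_6=0$ forces the $[\lambda^{1,1}]$-part of $d\eta$ proportional to $F$ to vanish and $\mathcal{C}_7=\mathcal{C}_8=\mathcal{C}_{10}=0$ kills the $[\lambda^{1,1}_0]$ and $\lcf\lambda^{2,0}\rcf$ parts; what is left of $d\eta$ is the $\mathcal C_5$ part (which is again proportional to $F$, hence actually $\mathcal C_5=0$ contributes nothing beyond... ) — more carefully, for this type $d\eta$ reduces to a multiple of $F$ coming from $\xi_{(5)}$ plus an $\eta\wedge(\cdot)$ term from $\xi_{(11)},\xi_{(12)}$; using $(\xi_{(5)X}\eta)(Y)=\tfrac{d^*\eta}{2n}(\langle X,Y\rangle - \eta(X)\eta(Y))$ one gets $d\eta = -\tfrac{d^*\eta}{n}F + \eta\wedge\alpha$ for some $1$-form $\alpha$, so $d^*\eta\,d\eta = -\tfrac{(d^*\eta)^2}{n}F + d^*\eta\,\eta\wedge\alpha$. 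Combining with the first conclusion $d(d^*\eta) = -d^*\eta\,\xi_\zeta\eta + d(d^*\eta)(\zeta)\eta$, the wedge $d(d^*\eta)\wedge\eta = -d^*\eta\,(\xi_\zeta\eta)\wedge\eta$, and one checks $\xi_\zeta\eta = -\alpha$ (both measure the $\mathcal C_{11}\oplus\mathcal C_{12}$ content of $\nabla_\zeta\eta$), so $d(d^*\eta)\wedge\eta + d^*\eta\,\eta\wedge\alpha = 0$; the leftover $-\tfrac{(d^*\eta)^2}{n}F$ must then also vanish, and indeed it does by a separate scalar identity — this is the step I expect to be the main obstacle, since pinning down exactly which scalar relation ($d((d^*\eta)^2/n)$-type, or the vanishing of the $\mathbb R F$-component of $d(d^*\eta\,\eta)$) kills that term requires invoking Proposition~\ref{divergenciaunouno} with all the relevant components zero, giving $\langle d\theta, F\rangle = \tfrac1n d^*\eta\, d^*F(\zeta)$ and, since $\theta=0$ and $d^*F(\zeta)$ (controlled by $\mathcal C_6=0$) vanishes, forcing the obstruction to $0$.
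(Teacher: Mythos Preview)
Your treatment of the first two ``In particular'' cases is correct and matches the paper's (implicit) argument: specialize the main identity, observe that $\theta=0$ since $\xi_{(4)}=0$, and rearrange using $\varphi^2X=-X+\eta(X)\zeta$.

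The third case, however, contains an error that creates a phantom obstruction. You write that for the type $\mathcal C_1\oplus\mathcal C_2\oplus\mathcal C_3\oplus\mathcal C_5\oplus\mathcal C_9\oplus\mathcal C_{12}$ the form $d\eta$ carries a $-\tfrac{d^*\eta}{n}F$ term coming from $\xi_{(5)}$. But $(\xi_{(5)X}\eta)(Y)=\tfrac{d^*\eta}{2n}\bigl(\langle X,Y\rangle-\eta(X)\eta(Y)\bigr)$ is \emph{symmetric} in $X,Y$, so its alternation vanishes and $\xi_{(5)}$ contributes nothing to $d\eta$. The same holds for $\xi_{(8)}$ and $\xi_{(9)}$ (and $\xi_{(11)}$ satisfies $\xi_{(11)}\eta=0$ outright, since $\xi_{(11)X}\in\lie u(n)^\perp_{|\zeta^\perp}$ annihilates $\zeta$). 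As Section~\ref{ejemplo} records, only $\xi_{(6)},\xi_{(7)},\xi_{(10)},\xi_{(12)}$ appear in $d\eta$; for the type at hand all but $\xi_{(12)}$ vanish, so simply $d\eta=\xi_\zeta\eta\wedge\eta$.

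With this correction the computation is immediate and requires no appeal to Proposition~\ref{divergenciaunouno}:
\[
d(d^*\eta\,\eta)=d(d^*\eta)\wedge\eta+d^*\eta\,d\eta
= -d^*\eta\,\xi_\zeta\eta\wedge\eta + d^*\eta\,\xi_\zeta\eta\wedge\eta=0,
\]
using the already-established $d(d^*\eta)=-d^*\eta\,\xi_\zeta\eta+d(d^*\eta)(\zeta)\,\eta$. Your ``leftover $-\tfrac{(d^*\eta)^2}{n}F$'' and the machinery you invoke to kill it are artifacts of the miscomputed $d\eta$.
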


If  we consider the identity $d^2 \eta=0$, we will  obtain an expression for $d(d^*F(\zeta))$.
\begin{proposition} \label{mainid} 
For almost contact metric  manifolds of dimension $2n+1$, the exterior derivative $d(d^*F(\zeta))$ is given by 
\begin{align*}
  \textstyle  \frac{n-1}{2n} d(d^* F(\zeta))(X_{\zeta^\perp})  = & 
     \textstyle    ((\nabla^{\Lie{U}(n)}_{e_i} \xi_{(7)})_{ e_i} \eta))(\varphi X)
   -   \textstyle     ((\nabla^{\Lie{U}(n)}_{e_i} \xi_{(10)})_{ e_i} \eta)(\varphi X)
 -  \textstyle       (\xi_{(7)e_i} \eta)(\xi_{(3)e_i} \varphi  X) 
\\
&
- 2\textstyle        (\xi_{(10) e_i} \eta)( \xi_{(1)\varphi X} e_i )  
-  \textstyle   \frac12     (\xi_{(10) e_i} \eta)( \xi_{(2)\varphi X} e_i ) 
  -\frac{n-1}{2n}     d^* F(\zeta)  \theta( X)   
  \\
  &
-   \textstyle  \tfrac{n-1}{2}  ( \xi_{(7) \theta^\sharp } \eta)(\varphi  X)
+ \textstyle     \frac{n-2}{2} ( \xi_{(10) \theta\sharp } \eta)(\varphi X ) 
+  \textstyle  \frac{n-1}{2n} d^* F(\zeta) ( \xi_{(12)\zeta  }  \eta ) ( X)
\\
&
-  (\xi_{(7)  \xi_{(12)\zeta  } \zeta} \eta ) ( \varphi X )
+  (\xi_{(10)  \xi_{(12)\zeta  } \zeta} \eta ) ( \varphi X ),
\end{align*}
\begin{align*}
 d(d^*F(\zeta))(\zeta) =&   
   \textstyle  + \frac{1}{n} d^*\eta d^*F(\zeta)  - \langle d \xi_\zeta \eta , F \rangle      
       + \textstyle  2     (\xi_{(7) e_i} \eta)  (\varphi \xi_{(8)e_i} {\zeta}) 
       + 2      (\xi_{(10) e_i} \eta) (\varphi  \xi_{(11)\zeta} e_i).      
\end{align*}
In particular,  if the almost contact metric structure is of type $\mathcal{C}_1 \oplus \mathcal{C}_2 \oplus \mathcal{C}_3 \oplus \mathcal{C}_4 \oplus  \mathcal{C}_5 \oplus \mathcal{C}_6 \oplus \mathcal{C}_8 \oplus \mathcal{C}_9 \oplus \mathcal{C}_{11} \oplus \mathcal{C}_{12} $ and $n>1$, then the exterior derivative  $d (d^*F(\zeta))$ is given  by  
\begin{equation} \label{dstarefezeta}
 d (d^*F(\zeta))  =  -  d^*F(\zeta) \textstyle   \theta
 +  d^*F(\zeta) \xi_\zeta \eta  
 + \left( \tfrac{1}{n} d^* \eta \;  d^*F(\zeta) - \langle d \xi_\zeta \eta , F \rangle \right) \eta.
\end{equation}

\end{proposition}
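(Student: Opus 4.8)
The proof runs parallel to those of Lemma \ref{etameanid1} and Proposition \ref{divergencia}, with the role of $d^2F=0$ now played by $d^2\eta=0$.

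First I would write the identically zero form $d^2\eta\in\Lambda^3\mathrm{T}^*M$ in terms of the minimal connection and the intrinsic torsion. Since $\nabla^{\Lie{U}(n)}\eta=0$ and $\nabla^{\Lie{U}(n)}$ has torsion $\xi_XY-\xi_YX$, one has $d\eta(X,Y)=\eta(\xi_XY-\xi_YX)=-(\xi_X\eta)(Y)+(\xi_Y\eta)(X)$; differentiating once more yields a formula for $d^2\eta(X_1,X_2,X_3)$ of exactly the same shape as \eqref{d2F} — an alternating sum over the pairs $a<b$ of the terms $\big(\big((\nabla^{\Lie{U}(n)}_{X_a}\xi)_{X_b}-(\nabla^{\Lie{U}(n)}_{X_b}\xi)_{X_a}\big)\eta\big)(X_c)$, of the torsion terms $\big(\xi_{\xi_{X_a}X_b-\xi_{X_b}X_a}\eta\big)(X_c)$, and of the bracket terms $\big([\xi_{X_a},\xi_{X_b}]\eta\big)(X_c)$. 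Because $\eta$ is a one-form this is far shorter than \eqref{usouso}; note also that $d\eta$ only sees the $\mathcal{C}_6\oplus\mathcal{C}_7\oplus\mathcal{C}_{10}\oplus\mathcal{C}_{12}$-part of $\xi$, so after the substitution below the $\nabla^{\Lie{U}(n)}\xi$-linear terms reduce to $d(d^*F(\zeta))$, $\nabla^{\Lie{U}(n)}\xi_{(7)}$, $\nabla^{\Lie{U}(n)}\xi_{(10)}$ and $d\,\xi_\zeta\eta$.

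Next I would recall the $\Lie{U}(n)$-decomposition $\Lambda^3\mathrm{T}^*M=\lcf\lambda^{3,0}\rcf\oplus\lcf\lambda^{2,1}_0\rcf\oplus\lcf\lambda^{1,0}\rcf\wedge F\oplus\mathbb{R}\,F\wedge\eta\oplus[\lambda^{1,1}_0]\wedge\eta\oplus\lcf\lambda^{2,0}\rcf\wedge\eta$, and observe that, by $(\xi_{(6)X}\eta)(Y)=-\tfrac{d^*F(\zeta)}{2n}F(X,Y)$ and $\alpha_{\mathbb{R}F}=\tfrac1n\langle\alpha,F\rangle F$, the $\mathbb{R}F$-component of $d\eta$ equals $\tfrac1n d^*F(\zeta)\,F$, so $d^*F(\zeta)=\langle d\eta,F\rangle$. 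Hence $d(d^*F(\zeta))$ appears exactly in the $\lcf\lambda^{1,0}\rcf\wedge F$-component of $d^2\eta$ (through its $\eta^\perp$-part) and in the $\mathbb{R}F\wedge\eta$-component (through its $\zeta$-part), and these are the only two summands that survive a contraction of $d^2\eta$ with $F$ on two arguments. I would therefore contract $d^2\eta$ with $F$ on the first two slots to obtain a one-form, split it into its $\eta^\perp$-part and its $\mathbb{R}\eta$-part, impose $d^2\eta=0$, and substitute the explicit descriptions of $(\xi_{(5)X}\eta)$, $(\xi_{(6)X}\eta)$, of $((\nabla^{\Lie{U}(n)}_X\xi_{(5)})_Y\eta)$ and of the analogous $((\nabla^{\Lie{U}(n)}_X\xi_{(6)})_Y\eta)(Z)=-\tfrac1{2n}d(d^*F(\zeta))(X)\,F(Y,Z)$, together with $\xi_{(4)e_i}e_i=\tfrac{n-1}2\theta^\sharp$ (as in Lemma \ref{etameanid1}), $\xi_\zeta\eta=\xi_{(12)\zeta}\eta$, and the trace properties of the $\xi_{(i)}$ from \cite{ChineaGonzalezDavila}. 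The $\eta^\perp$- and $\mathbb{R}\eta$-parts are then the two displayed identities.

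For the particular case $\mathcal{C}_1\oplus\dots\oplus\mathcal{C}_6\oplus\mathcal{C}_8\oplus\mathcal{C}_9\oplus\mathcal{C}_{11}\oplus\mathcal{C}_{12}$ one sets $\xi_{(7)}=\xi_{(10)}=0$; every term of the two general formulas carrying a $\xi_{(7)}$- or a $\xi_{(10)}$-factor then drops, leaving $d(d^*F(\zeta))(X_{\zeta^\perp})=-d^*F(\zeta)\,\theta(X)+d^*F(\zeta)\,(\xi_\zeta\eta)(X)$ and $d(d^*F(\zeta))(\zeta)=\tfrac1n d^*\eta\,d^*F(\zeta)-\langle d\,\xi_\zeta\eta,F\rangle$. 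Since both $\theta$ and $\xi_\zeta\eta$ lie in $\eta^\perp$ — for $\theta$ this is immediate from $(n-1)\theta=-\varphi(d^*F)^\sharp+\nabla_\zeta\eta$ evaluated on $\zeta$, and $(\xi_\zeta\eta)(\zeta)=-\eta(\xi_\zeta\zeta)=0$ because $\xi_\zeta\zeta\in\zeta^\perp$ — these two pieces assemble into the single identity \eqref{dstarefezeta}. The main obstacle is the middle step: expanding the $F$-contraction of the degree-three form $d^2\eta$ into traces of $\nabla^{\Lie{U}(n)}\xi_{(i)}$ and quadratic expressions in the $\xi_{(i)}$, and then tracking which terms survive the projections onto $\eta^\perp$ and $\mathbb{R}\eta$, with the correct signs and normalizing constants (in particular the coefficient $\tfrac{n-1}{2n}$ and the relative signs of the $\xi_{(7)}$-, $\xi_{(8)}$-, $\xi_{(10)}$- and $\xi_{(11)}$-type terms). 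This is routine but delicate, of the same nature as the sign corrections noted after Lemma \ref{etameanid1}.
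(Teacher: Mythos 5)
Your proposal follows essentially the same route as the paper: write $d^2\eta=0$ in terms of $\nabla^{\Lie{U}(n)}$ and $\xi$, contract with $F$ on two arguments (the paper's identity $0=\tfrac12 d^2\eta(e_i,\varphi e_i,X)$), split the resulting one-form into its $\eta^\perp$- and $\mathbb{R}\eta$-parts by taking $X=X_{\zeta^\perp}$ and $X=\zeta$, and substitute the explicit expressions for $\xi_{(5)}\eta$, $\xi_{(6)}\eta$, $\nabla^{\Lie{U}(n)}\xi_{(6)}$ and $\xi_{(4)e_i}e_i=\tfrac{n-1}{2}\theta^\sharp$, with the special case obtained by setting $\xi_{(7)}=\xi_{(10)}=0$. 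The argument is correct and matches the paper's proof in both strategy and detail.
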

\begin{proof}
Using $\nabla = \nabla^{\Lie{U}(n)} - \xi$ and   $\nabla^{\Lie{U}(n)} \eta =0$,  the form    $d^2 \eta$ is written in terms of $\nabla^{\Lie{U}(n)}$ and $\xi$, i.e.
  \begin{eqnarray*}
  d^2\eta (X_1,X_2,X_3) & = & \textstyle \sum_{1 \leq a < b \leq 3} (-1)^{a+b} \left( \left((\nabla^{\Lie{U}(n)}_{X_a} \xi)_{X_b} - (\nabla^{\Lie{U}(n)}_{X_b} \xi)_{X_a}\right) \eta \right) (X_c) \nonumber
   \\
&& 
\textstyle + \sum_{1 \leq a < b \leq 3} (-1)^{a+b}(\xi_{\xi_{X_a} X_b - \xi_{X_b} X_a}  \eta) (X_c) \\
&&
\textstyle - \sum_{1 \leq a < b \leq 3} (-1)^{a+b}([\xi_{X_a} ,\xi_{X_b}]  \eta) (X_c, X_d), \nonumber
  \end{eqnarray*}
 where  $ \{c\} =\{1,2,3\}-\{a,b\}$ in each case.
 
 Note that,  $\Lambda^3 T^* M$  is decomposed into 
\begin{equation} \label{tresformas}
\Lambda^3 T^* M = \lcf \lambda^{3,0} \rcf + \lcf \lambda^{2,1}_0 \rcf + \lcf \lambda^{1,0} \rcf  \wedge F + \lcf \lambda^{2,0} \rcf  \wedge \eta + \mathbb{R}   F \wedge \eta  + [\lambda^{1,1}_0 ]\wedge \eta,
\end{equation}
under the action of $\Lie{U}(n) \times   1$. Now, we compute  the parts of $d^2 \eta$ in   $ \lcf \lambda^{1,0} \rcf  \wedge F$ and $\mathbb{R} \; F \wedge  \eta$, by  
 contracting   $d^2 \eta$ with $F$ on the first to arguments. Then  we have 
\begin{align}
0  = & \textstyle \tfrac12   d^2 \eta (e_i, \varphi e_i, X )
    =   -  \textstyle   ((\nabla^{\Lie{U}(n)}_{e_i} \xi)_{\varphi e_i} \eta)(X)
              +  \textstyle    ((\nabla^{\Lie{U}(n)}_{e_i} \xi)_{X} \eta)( \varphi e_i)
              -  \textstyle   ((\nabla^{\Lie{U}(n)}_{X} \xi)_{ e_i} \eta)( \varphi e_i) \nonumber\\
     & \hspace{2.5cm} \;\;\;\;\; +   \textstyle    (\xi_{e_i} \eta)(\xi_{\varphi e_i} X)   
             +   \textstyle   (\xi_{ \xi_{e_i} X } \eta)( \varphi e_i)        
             -    \textstyle   (\xi_{e_i } \eta)( \xi_{X}  \varphi e_i) 
                 \label{aboveiden} 
                    \\
    &  
\hspace{2.5cm}\;\;\; \;\; -   \textstyle    ( \xi_{X} e_i   \eta)( \varphi e_i)    -   \textstyle   (\xi_{\xi_{e_i} \varphi e_i} \eta)(X)
         +   \textstyle    (\xi_{X }\eta)(\xi_{e_i} \varphi e_i).  \nonumber                            
\end{align}
Now, by considering $X= X_{\zeta^\perp} $ and  each component $\xi_{(i)}$ of the intrinsic torsion beside with its properties, it follows the first required identity. 
Here it is used 
$$
  (\nabla^{\Lie{U}(n)}_{Z} \xi_{(6)})_{X} Y  =  \tfrac{1}{2n} d(d^*F(\zeta))(Z) (F(X,Y) \zeta + \eta(Y) \varphi(X)).
$$
From this, it is obtained
$
\textstyle  ( (\nabla^{\Lie{U}(n)}_{e_i} \xi_{(6)})_{e_i} \eta)(Z) = - \frac{1}{2n} d(d^*F(\zeta))(\varphi Z).
$

For the second identity required  in Lemma, by  taking  $X=\zeta$ in the identity \eqref{aboveiden}
 and considering  each component $\xi_{(i)}$ of the intrinsic torsion beside with its properties, it follows the second required identity. 
In the computation it is used \vspace{1mm}

$\textstyle    ((\nabla^{\Lie{U}(n)}_{e_i} \xi_{(12)})_{\zeta} \eta) (\varphi e_i)  
 =  
\textstyle 
- \langle d\xi_\zeta \eta, F \rangle -   \langle \xi_{(4)e_i} e_i, \varphi \xi_\zeta \zeta \rangle 
 =  - \mathrm{div} \, \varphi \xi_\zeta \eta) + \langle \xi_{(4)e_i} e_i, \varphi \xi_\zeta \zeta \rangle.
$
\end{proof}

Next, using the identity $d^2 \eta =0$, we will study the contributions of the components $\xi_{(i)}$   in the exterior derivative $d \xi_\zeta \eta$. Firstly we give an expression for $d \xi_\zeta \eta$ in terms of $\nabla^{\Lie{U}(n)}$ and $\xi$.
\begin{proposition} \label{extderxi12}
For almost contact metric manifolds, the exterior derivative of the one-form $\xi_\zeta \eta$
 which determines the $\mathcal{C}_{12}$-component of the intrinsic torsion is given by
  \begin{align*}
 d \xi_{\zeta}  \eta (X, Y) =& 
      ((\nabla^{\Lie{U}(n)}_{\zeta} \xi)_X  \eta) (Y )
      -((\nabla^{\Lie{U}(n)}_{\zeta} \xi)_Y  \eta) ( X)
    +  (\xi_X \eta) (\xi_{\zeta} Y ) 
    - (\xi_Y \eta) (\xi_{\zeta} X )
         \\
     &  
      - (\xi_{\xi_X {\zeta}} \eta)(Y) 
       + (\xi_{\xi_Y {\zeta}} \eta)(X)  
            + (\xi_{\xi_{\zeta} X} \eta)(Y)  
        - (\xi_{\xi_{\zeta} Y} \eta)(X) 
\end{align*}
\end{proposition}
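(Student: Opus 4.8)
The strategy is the standard one for computing the exterior derivative of a one-form obtained from a $\Lie{U}(n)$-invariant piece of the intrinsic torsion: rewrite the Levi-Civita exterior derivative in terms of the minimal connection $\nabla^{\Lie{U}(n)}$ and the intrinsic torsion $\xi$, and then simplify. Recall that for any one-form $\alpha$ the Levi-Civita differential is $d\alpha(X,Y) = (\nabla_X \alpha)(Y) - (\nabla_Y\alpha)(X)$, and that this formula holds for any metric connection provided one subtracts the torsion term; concretely, substituting $\nabla = \nabla^{\Lie{U}(n)} - \xi$ gives $d\alpha(X,Y) = (\nabla^{\Lie{U}(n)}_X\alpha)(Y) - (\nabla^{\Lie{U}(n)}_Y\alpha)(X) + \alpha(\xi_X Y) - \alpha(\xi_Y X)$, since the intrinsic torsion's skew part in its last two slots is $\xi_X Y - \xi_Y X$ (this is how $d$ is expressed for $2$-forms in \eqref{d2F} and for $\eta$ in the proof of Proposition \ref{mainid}).

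**Carrying it out.** I would apply this to $\alpha = \xi_\zeta\eta$, i.e. $\alpha(W) = (\xi_W \eta)$ evaluated appropriately — more precisely $\alpha = \xi_\zeta\eta$ is the one-form $W \mapsto (\xi_\zeta\eta)(W)$; but the cleaner route is to differentiate the assignment $X \mapsto \xi_X\eta$ as a section and then contract. Start from $d\xi_\zeta\eta(X,Y) = \nabla_X(\xi_\zeta\eta)(Y) - \nabla_Y(\xi_\zeta\eta)(X)$. Since $\xi_\zeta\eta$ as a function of the base point is built from the tensor field $\xi$, the Reeb field $\zeta$, and $\eta$, the Leibniz rule applied to $\nabla_X(\xi_\zeta\eta)$ produces three groups of terms: one where $\nabla_X$ hits $\xi$ (giving $(\nabla_X\xi)_\zeta\eta$), one where it hits $\zeta$ in the lower slot (giving $\xi_{\nabla_X\zeta}\eta$), and one where it hits $\eta$ (giving $\xi_\zeta(\nabla_X\eta)$). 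Now convert every $\nabla$ into $\nabla^{\Lie{U}(n)}$: use $\nabla_X\xi = \nabla^{\Lie{U}(n)}_X\xi - (\text{commutator-type }\xi\text{ terms})$, and crucially $\nabla_X\zeta = -\xi_X\zeta$ and $\nabla_X\eta = -\xi_X\eta$ (both recorded at the end of the proof of Lemma \ref{lambdaunouno}, since $\nabla^{\Lie{U}(n)}\zeta = 0$ and $\nabla^{\Lie{U}(n)}\eta = 0$). The $(\nabla_X\xi)_\zeta\eta$ piece, expanded via $\nabla = \nabla^{\Lie{U}(n)} - \xi$ acting on the tensor $\xi$, yields $((\nabla^{\Lie{U}(n)}_X\xi)_\zeta\eta)(Y)$ together with a term $-(\xi_{\xi_X\zeta}\eta)(Y)$ coming from the connection correction in the middle slot and a term $-(\xi_\zeta\xi_X\eta)(Y) + (\xi_{\xi_X(\text{—})}\dots)$ — here one must be careful to track each slot of $\xi$ separately.

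**Antisymmetrizing.** After assembling $\nabla_X(\xi_\zeta\eta)(Y)$ and subtracting the same with $X,Y$ swapped, the terms organize into exactly the four pairs in the statement: the covariant-derivative pair $((\nabla^{\Lie{U}(n)}_\zeta\xi)_X\eta)(Y) - ((\nabla^{\Lie{U}(n)}_\zeta\xi)_Y\eta)(X)$ — note the subscript is $\zeta$, not $X$, because the "free" derivative directions $X,Y$ end up in the middle slot of $\xi$ after one uses that $\xi_X\zeta$ sits in $\lie u(n)^\perp$ and the symmetries \eqref{inttorcar}; the pair $(\xi_X\eta)(\xi_\zeta Y) - (\xi_Y\eta)(\xi_\zeta X)$ from the $\nabla\eta = -\xi\eta$ substitution; the pair $-(\xi_{\xi_X\zeta}\eta)(Y) + (\xi_{\xi_Y\zeta}\eta)(X)$; and the pair $(\xi_{\xi_\zeta X}\eta)(Y) - (\xi_{\xi_\zeta Y}\eta)(X)$ from $\nabla_X\zeta = -\xi_X\zeta$ feeding the lower slot. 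The main bookkeeping obstacle is precisely disentangling which occurrences of $\xi$ get differentiated and which slot each correction term lands in — there is genuine risk of sign errors and of double-counting the term where both the Leibniz rule and the $\nabla = \nabla^{\Lie{U}(n)}-\xi$ conversion touch the tensor $\xi$. I would handle this by writing $\xi_\zeta\eta$ pointwise as a contraction $\langle \xi \,|\, \zeta \otimes \eta \otimes (\text{id})\rangle$ and applying a single clean Leibniz expansion, then a single clean substitution $\nabla^g = \nabla^{\Lie{U}(n)} - \xi$, before antisymmetrizing; the fact that $d^2\eta = 0$ is not needed for this proposition (it is only needed afterward to extract $\mathcal C_{12}$-constraints), so no closedness identity enters here and the proof is purely the connection-conversion computation.
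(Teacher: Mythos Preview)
Your direct Leibniz computation will not produce the formula as stated, and the gap is exactly at the point you flagged but waved away. When you expand $d(\xi_\zeta\eta)(X,Y)$ via $\nabla = \nabla^{\Lie{U}(n)} - \xi$ (or equivalently via the torsion formula you quote), what you obtain is
\[
d(\xi_\zeta\eta)(X,Y) = ((\nabla^{\Lie{U}(n)}_X\xi)_\zeta\eta)(Y) - ((\nabla^{\Lie{U}(n)}_Y\xi)_\zeta\eta)(X) + (\xi_\zeta\eta)(\xi_X Y) - (\xi_\zeta\eta)(\xi_Y X),
\]
since $\nabla^{\Lie{U}(n)}\zeta = 0$ and $\nabla^{\Lie{U}(n)}\eta = 0$. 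The derivative direction here is $X$ (resp.\ $Y$), with $\zeta$ sitting in the lower slot of $\xi$; the statement you are trying to prove has these \emph{swapped}. Your appeal to ``$\xi_X\zeta\in\lie{u}(n)^\perp$ and the symmetries \eqref{inttorcar}'' does not effect this swap: identity \eqref{inttorcar} only records how $\varphi$ interacts with the two output slots of $\xi$, and says nothing relating $(\nabla^{\Lie{U}(n)}_X\xi)_\zeta$ to $(\nabla^{\Lie{U}(n)}_\zeta\xi)_X$.

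The swap is exactly what $d^2\eta(X,Y,\zeta)=0$ provides, and this is how the paper proceeds. Expanding $d^2\eta(X,Y,\zeta)$ via the same formula used in the proof of Proposition \ref{mainid} yields, among its first-order terms,
\[
((\nabla^{\Lie{U}(n)}_X\xi)_\zeta\eta)(Y) - ((\nabla^{\Lie{U}(n)}_\zeta\xi)_X\eta)(Y) - ((\nabla^{\Lie{U}(n)}_Y\xi)_\zeta\eta)(X) + ((\nabla^{\Lie{U}(n)}_\zeta\xi)_Y\eta)(X),
\]
the $(a,b)=(1,2)$ term vanishing because $((\nabla^{\Lie{U}(n)}_\bullet\xi)_\bullet\eta)(\zeta)=0$. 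Setting $d^2\eta(X,Y,\zeta)=0$ and combining with the displayed identity above is precisely what replaces $(\nabla^{\Lie{U}(n)}_X\xi)_\zeta$ by $(\nabla^{\Lie{U}(n)}_\zeta\xi)_X$ and simultaneously generates the four quadratic $\xi$-pairs in the statement. So contrary to your closing remark, $d^2\eta=0$ is not optional here; it is the mechanism that moves $\zeta$ into the derivative slot.
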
 
\begin{proof}
It follows from the identity $d^2 \eta (X,Y,\zeta)=0$. In fact, we have 
\begin{align*}
(d^2 \eta) (X,Y,{\zeta})
=& 
 ((\nabla^{\Lie{U}(n)}_X \xi)_{\zeta}  \eta) (Y) 
 -((\nabla^{\Lie{U}(n)}_Y \xi)_{\zeta} \eta) ( X ) 
     -((\nabla^{\Lie{U}(n)}_{\zeta} \xi)_X  \eta) (Y )
      +((\nabla^{\Lie{U}(n)}_{\zeta} \xi)_Y  \eta) ( X)
  \\ 
& 
 + (\xi_{\zeta} \eta) (\xi_X Y )
   -  (\xi_{\zeta} \eta) (\xi_Y X ) 
    -  (\xi_X \eta) (\xi_{\zeta} Y ) 
    + (\xi_Y \eta) (\xi_{\zeta} X )
\\
&
      + (\xi_{\xi_X {\zeta}} \eta)(Y) 
       - (\xi_{\xi_Y {\zeta}} \eta)(X)  
        - (\xi_{\xi_{\zeta} X} \eta)(Y)  
        + (\xi_{\xi_{\zeta} Y} \eta)(X). 
\end{align*}
Since we have $((\nabla^{\Lie{U}(n)}_X \xi)_{\zeta}  \eta) (Y) = (\nabla^{\Lie{U}(n)}_X \xi_{\zeta}  \eta) (Y)$ and 
$$
d \xi_\zeta \eta(X,Y)  =  (\nabla^{\Lie{U}(n)}_X \xi_{\zeta}  \eta) (Y) - (\nabla^{\Lie{U}(n)}_Y \xi_{\zeta}  \eta) (X) 
 + (\xi_{\zeta} \eta) (\xi_X Y )
 -  (\xi_{\zeta} \eta) (\xi_Y X ),
 $$
it is obtained the required identity.
\end{proof}

In next lemma, $(d \xi_\zeta \eta)_V$ will denote the projection of $d \xi_\zeta \eta$ on the $\Lie{U}(n)$-space $V$.
\begin{lemma} \label{comp12}
The $\Lie{U}(n)$-components of $d \xi_\zeta \eta$ are given by: 

 \noindent $(d \xi_\zeta \eta)_{\mathbb{R} \, F} = \frac1{n} \langle d \xi_\zeta \eta, F \rangle \; F$, where 
\begin{align*}
\qquad  \langle d \xi_\zeta \eta, F \rangle 
 =& 
       -  d(d^*F(\zeta))(\zeta)  + \textstyle \frac{1}{n} d^*\eta \, d^*F(\zeta)
      + 2 \textstyle   ( \xi_{(7)  e_i} \eta)  (\varphi \xi_{(8)e_i} \zeta)   
        + 2 \textstyle   (\xi_{(10) e_i} \eta) (\varphi \xi_{(11)\zeta} e_i ),  
\end{align*}
\begin{align*}
 (d \xi_{\zeta}  \eta)_{[\lambda^{1,1}] }  (X, Y) =& 
      -  \textstyle  \tfrac{1}{n} d(d^*F(\zeta))(\zeta) F(X, Y)
     +  \textstyle  \tfrac{1}{n^2} d^*\eta d^*F(\zeta) F(X, Y)
     +2((\nabla^{\Lie{U}(n)}_{\zeta} \xi_{(7)})_{X}  \eta) (Y )
    \\
    & - \tfrac{2}{n} d^* \eta (\xi_{(7) X} \eta) (Y)
      +  \textstyle  \tfrac{2}{n} d^*F(\zeta)  (\xi_{(8) X} \eta) (\varphi Y)
      - 2   (\xi_{(7) X} {\eta})  (\xi_{(8) Y} \zeta )  
      \\
      &
       +  2    ( \xi_{(7) Y} {\eta})  (\xi_{(8) X} \zeta) 
        +  2  (\xi_{(9) X} \eta)  (\xi_{(10) Y} {\zeta}) 
     - 2  (\xi_{(9) Y} \eta)  (\xi_{(10) X} {\zeta})
      \\
     &
    + 2 (\xi_{(10)X} \eta) (\xi_{(11)\zeta} Y ) 
    - 2 (\xi_{(10)Y} \eta) (\xi_{(11)\zeta} X ),     
       \end{align*}       
      \begin{align*}
  \hspace{-11mm} 
  (d\xi_{\zeta}  \eta)_{\lcf \lambda^{2,0} \rcf } ( X, Y)  = & 
    2((\nabla^{\Lie{U}(n)}_{\zeta} \xi_{(10)})_{X}  \eta) (Y )
   -   \textstyle  \tfrac2{n} d^*F(\zeta) \langle \xi_{(11)\zeta} X , \varphi Y \rangle
      - 2   (\xi_{ (7) X} \eta)   (\xi_{ (9) Y} \zeta)
      \\
      &
       + 2   (\xi_{ (7) Y} \eta)  ( \xi_{ (9) X} \zeta)
    + 2  (\xi_{(7)X} \eta) (\xi_{(11)\zeta} Y ) 
    - 2  (\xi_{(7)Y} \eta) (\xi_{(11)\zeta} X  )
\\
& - \tfrac{2}{n} d^*\eta (\xi_{(10) X} \eta)(Y)
        + 2  (\xi_{ (8) X} \eta)   (\xi_{ (10) Y} \zeta)  
       - 2   ( \xi_{ (8) Y} \eta)   (\xi_{ (10) X} \zeta),   
   \end{align*}

 \noindent $(d \xi_{\zeta}  \eta)_{\eta \wedge \lcf \lambda^{1,0} \rcf} = \eta \wedge \zeta \lrcorner d \xi_\zeta \eta$, where  
\begin{align*}
  \zeta \lrcorner d \xi _\zeta \eta (X)  
  =&    ((\nabla^{\Lie{U}(n)}_{\zeta} \xi_{(12)})_\zeta  \eta) (X )  +  (\xi_{(12)\zeta} \eta)  ( \xi_{(11)\zeta} X)  
    -  \textstyle   \tfrac{1}{2n} d^*\eta   (\xi_{(12)\zeta} \eta )  (X) 
    \\
    & 
     - (\xi_{(8) \xi_{(12)\zeta} \zeta} \eta) (X) 
-  (\xi_{(9) \xi_{(12)\zeta} \zeta} \eta) (X)  
 -  \textstyle  \tfrac{1}{2n} d^*F(\zeta) (\xi_{(12)\zeta} \eta )  (\varphi X)  
 \\
 &
 + (\xi_{(7) \xi_{(12)\zeta} \zeta} \eta) (X) 
+  (\xi_{(10) \xi_{(12)\zeta} \zeta} \eta) (X). 
 \end{align*}
\end{lemma}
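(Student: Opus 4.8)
\textbf{Proof plan for Lemma \ref{comp12}.}
The strategy is to take the coordinate-free expression for $d\xi_\zeta\eta$ obtained in Proposition \ref{extderxi12} and project it orthogonally onto each of the four irreducible $\Lie{U}(n)$-summands of $\Lambda^2\mathrm{T}^*M = \mathbb{R}F \oplus [\lambda_0^{1,1}] \oplus \lcf\lambda^{2,0}\rcf \oplus \eta\wedge\lcf\lambda^{1,0}\rcf$, using the projection formulas recorded in the Remark. So first I would substitute $\xi = \sum_i \xi_{(i)}$ into the formula of Proposition \ref{extderxi12}, using that each $\xi_{(i)}$ satisfies \eqref{inttorcar} and lands in the prescribed module; in particular $\nabla^{\Lie{U}(n)}\xi_{(i)} \in \mathcal C_i$ since $\nabla^{\Lie{U}(n)}$ is a $\Lie{U}(n)$-connection, and $\xi_\zeta\eta = \xi_{(12)\zeta}\eta$ since only $\mathcal C_{12}$ contributes a nonzero value to $X\mapsto \xi_X\eta$ when $X=\zeta$ restricted to $\zeta$-image. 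Then for each target module I apply the relevant projector: $\alpha_{\mathbb{R}F} = \frac1n\langle\alpha,F\rangle F$ for the scalar part, $2\alpha_{[\lambda^{1,1}]}(X,Y) = \alpha(\varphi^2X,\varphi^2Y)+\alpha(\varphi X,\varphi Y)$ for the $[\lambda^{1,1}]$-part (then subtract off its $\mathbb RF$-trace), $2\alpha_{\lcf\lambda^{2,0}\rcf}(X,Y)=\alpha(\varphi^2X,\varphi^2Y)-\alpha(\varphi X,\varphi Y)$ for the $\lcf\lambda^{2,0}\rcf$-part, and $\alpha_{\eta\wedge\lcf\lambda^{1,0}\rcf}=\eta\wedge(\zeta\lrcorner\alpha)$ for the last.

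Second, the bulk of the work is bookkeeping: each term in Proposition \ref{extderxi12} that mixes two intrinsic-torsion factors $\xi_{(i)}$ and $\xi_{(j)}$ produces, after $\varphi$-substitution, a bilinear expression in $\xi_{(i)}\eta$ (or $\xi_{(i)}\zeta$) and one has to identify which $(i,j)$-pairs survive in each module. The key inputs are the $\varphi$-compatibility identities of the individual modules: $\mathcal C_5 \cong \mathbb R\langle\cdot,\cdot\rangle_{|\zeta^\perp}$ and $\mathcal C_6\cong\mathbb RF$ give the explicit formulas $(\xi_{(5)X}\eta)(Y) = \frac{d^*\eta}{2n}(\langle X,Y\rangle-\eta(X)\eta(Y))$ and $(\xi_{(6)X}\eta)(Y) = -\frac{d^*F(\zeta)}{2n}F(X,Y)$ already used above, which is how the $d^*\eta$, $d^*F(\zeta)$ and $d(d^*F(\zeta))(\zeta)$ terms enter. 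Terms with $\nabla^{\Lie{U}(n)}_\zeta$ acting on $\xi_{(7)}$ or $\xi_{(10)}$ remain as covariant-derivative terms since those modules are not determined by a single form; for the $\eta\wedge\lcf\lambda^{1,0}\rcf$-component one instead contracts with $\zeta$, which kills all terms whose first slot is forced into $\zeta^\perp$ and isolates the $\mathcal C_{12}$-contributions, giving the last displayed formula. I would also use $d(d^*F(\zeta))(\zeta) = \frac1n d^*\eta\,d^*F(\zeta) - \langle d\xi_\zeta\eta,F\rangle + 2(\xi_{(7)e_i}\eta)(\varphi\xi_{(8)e_i}\zeta) + 2(\xi_{(10)e_i}\eta)(\varphi\xi_{(11)\zeta}e_i)$ from Proposition \ref{mainid} to present the scalar part $\langle d\xi_\zeta\eta,F\rangle$ in the stated self-referential-looking but correct form.

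The main obstacle is purely organizational accuracy: there are on the order of eight quadratic-in-$\xi$ terms and four $\nabla^{\Lie{U}(n)}\xi$ terms in Proposition \ref{extderxi12}, and each must be $\varphi$-symmetrized and $\varphi$-antisymmetrized correctly, keeping track of signs from $\varphi^2 = -I+\eta\otimes\zeta$ and of which factor carries the $\eta$ versus the $\zeta$. The delicate points are (a) correctly discarding cross-terms between modules that are $\varphi$-invariant and those that are $\varphi$-anti-invariant (these vanish under the appropriate symmetrization, e.g.\ $\mathcal C_8$-against-$\mathcal C_9$ type products land in a definite parity), and (b) handling the $\eta\otimes\zeta$ correction in $\varphi^2$, which is why $X$ is replaced by $X_{\zeta^\perp}$ in several of the other propositions and why the $\zeta\lrcorner$ contraction is needed to separately capture the purely-$\zeta$ behaviour. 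Once these parities are pinned down the coefficients $\tfrac2n$, $\tfrac{2}{n^2}$, etc.\ fall out of the normalizations $\langle F,F\rangle = n$ and $\tr_{\zeta^\perp} I = 2n$; no new geometric idea is required beyond the machinery already set up, so I would present the computation in the same condensed style as Lemmas \ref{lambdaunouno}--\ref{etameanid1}, merely indicating the projection step and quoting the module identities.
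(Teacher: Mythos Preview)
Your proposal is correct and follows essentially the same approach as the paper, whose proof is the single line ``It follows from the expression for $d\xi_\zeta\eta$ given in Proposition~\ref{extderxi12}.'' Your plan simply makes explicit the projection and bookkeeping that the paper leaves to the reader; note that the $d(d^*F(\zeta))(\zeta)$ term already arises directly from the $\nabla^{\Lie{U}(n)}_\zeta\xi_{(6)}$ contribution in Proposition~\ref{extderxi12} (via $(\xi_{(6)X}\eta)(Y)=-\tfrac{d^*F(\zeta)}{2n}F(X,Y)$), so invoking Proposition~\ref{mainid} is not strictly necessary, though it gives a consistent cross-check.
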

\begin{proof}
It follows  from the expression for $d\xi_\zeta \eta$ given in Proposition \ref{extderxi12}.
\end{proof}

Now we give   some  sufficient conditions  for the vanishing of   components of $d\xi_\zeta \eta$.
\begin{proposition}$\,$
   \begin{enumerate}
  \item[$(\mathrm{i})$]
If the structure is of type $\mathcal{C}_1 \oplus \mathcal{C}_2 \oplus \mathcal{C}_3 \oplus    \mathcal{C}_4 \oplus \mathcal{C}_5  \oplus \mathcal{C}_9 \oplus \mathcal{C}_{12} \oplus \mathcal{C}_x \oplus \mathcal{C}_{y}  $,
 where $(x,y) \in \{ (7,10), (7,11), (8,10), (8,11)\}$,   then  $\langle d\xi_\zeta \eta, F \rangle  =0$.  In such a case, we have $\mathrm{div} \varphi \xi_\zeta \zeta = (n-1) \theta  (\varphi \xi_\zeta \zeta) $.
 \item[$(\mathrm{ii})$]
If the structure is of type $\mathcal{C}_1 \oplus \mathcal{C}_2 \oplus \mathcal{C}_3 \oplus    \mathcal{C}_4 
\oplus \mathcal{C}_5
\oplus \mathcal{C}_x   \oplus \mathcal{C}_9   \oplus \mathcal{C}_{11} \oplus \mathcal{C}_{12}$ or 
$\mathcal{C}_1 \oplus \mathcal{C}_2 \oplus \mathcal{C}_3 \oplus    \mathcal{C}_4 
\oplus \mathcal{C}_5 
\oplus \mathcal{C}_x      \oplus \mathcal{C}_{10} \oplus \mathcal{C}_{12}$, where $x\in \{6,8\}$,   then   $(d\xi_\zeta \eta)_{[ \lambda^{1,1}_0]}  =0$.

\item[$(\mathrm{iii})$]
If the structure is of type $\mathcal{C}_1 \oplus \mathcal{C}_2 \oplus \mathcal{C}_3 \oplus    \mathcal{C}_4 
\oplus \mathcal{C}_5
\oplus \mathcal{C}_8   \oplus \mathcal{C}_9   \oplus \mathcal{C}_{11} \oplus \mathcal{C}_{12}$ or 
$\mathcal{C}_1 \oplus \mathcal{C}_2 \oplus \mathcal{C}_3 \oplus    \mathcal{C}_4 
\oplus \mathcal{C}_5 
\oplus \mathcal{C}_8      \oplus \mathcal{C}_{10} \oplus \mathcal{C}_{12}$,    then   $(d\xi_\zeta \eta)_{[ \lambda^{1,1}]}  =0$.

\item[$(\mathrm{iv})$]
If the structure is of type $\mathcal{C}_1 \oplus \mathcal{C}_2 \oplus \mathcal{C}_3 \oplus    \mathcal{C}_4 
\oplus \mathcal{C}_5
    \oplus \mathcal{C}_{8} \oplus \mathcal{C}_{12}  \oplus \mathcal{C}_x   \oplus \mathcal{C}_y$, 
$(x,y) \in \{ (6,7), (6,9) , (9,11)\}$, then   $(d\xi_\zeta \eta)_{\lcf \lambda^{2,0} \rcf}  =0$.

\item[$(\mathrm{v})$]
If the structure is of type $\mathcal{C}_1 \oplus \mathcal{C}_2 \oplus \mathcal{C}_3 \oplus    \mathcal{C}_4 \oplus \mathcal{C}_5  \oplus 
\mathcal{C}_8 \oplus \mathcal{C}_9 \oplus \mathcal{C}_{11} \oplus \mathcal{C}_{12}$,   then   $d(\xi_\zeta \eta)_{[ \lambda^{1,1}]}  =0$ and   $(d\xi_\zeta \eta)_{\lcf \lambda^{2,0} \rcf}  =0$. That is, $d\xi_\zeta \eta \in \eta \wedge \lcf \lambda^{1,0} \rcf$.

\end{enumerate}
\end{proposition}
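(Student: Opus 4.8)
The plan is to read off all five statements from the explicit formulas for the $\Lie{U}(n)$-components of $d\xi_\zeta\eta$ obtained in Lemma \ref{comp12}, together with the expression for $d(d^*F(\zeta))$ in Proposition \ref{mainid}. Every term occurring in those formulas is of one of two kinds: a \emph{linear} term built from $d(d^*F(\zeta))(\zeta)$, $d^*\eta\,d^*F(\zeta)$ or a covariant derivative $\nabla^{\Lie{U}(n)}\xi_{(j)}$, or a \emph{quadratic} term which is a contraction of a product $\xi_{(j)}\cdot\xi_{(k)}$ of two distinct components of the intrinsic torsion. A quadratic term can survive only when \emph{both} of the modules $\mathcal{C}_j,\mathcal{C}_k$ occur in the prescribed type, while a linear term carrying the factor $d^*F(\zeta)$ can survive only when $\mathcal{C}_6$ occurs, since $\xi_{(6)}=0$ forces $d^*F(\zeta)\equiv 0$ — hence $d(d^*F(\zeta))\equiv 0$ — by the normalisation $(\xi_{(6)X}\eta)(Y)=-\tfrac{d^*F(\zeta)}{2n}F(X,Y)$. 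So for each listed type it suffices to check, summand by summand, that no such index pair is simultaneously present and that the relevant $d^*F(\zeta)$-factors vanish.

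For $(\mathrm{i})$ the type excludes $\mathcal{C}_6$, so $d^*F(\zeta)\equiv 0$ and $d(d^*F(\zeta))\equiv 0$, which annihilates the two linear terms in the formula for $\langle d\xi_\zeta\eta,F\rangle$ in Lemma \ref{comp12}; the two remaining quadratic terms $(\xi_{(7)e_i}\eta)(\varphi\xi_{(8)e_i}\zeta)$ and $(\xi_{(10)e_i}\eta)(\varphi\xi_{(11)\zeta}e_i)$ vanish because each of the four admissible pairs $(x,y)$ contains at most one of $\{\mathcal{C}_7,\mathcal{C}_8\}$ and at most one of $\{\mathcal{C}_{10},\mathcal{C}_{11}\}$. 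This gives $\langle d\xi_\zeta\eta,F\rangle=0$. For the stated consequence one reopens the proof of Proposition \ref{mainid}, where the relation $((\nabla^{\Lie{U}(n)}_{e_i}\xi_{(12)})_\zeta\eta)(\varphi e_i)=-\langle d\xi_\zeta\eta,F\rangle-\langle\xi_{(4)e_i}e_i,\varphi\xi_\zeta\zeta\rangle=-\Div(\varphi\xi_\zeta\zeta)+\langle\xi_{(4)e_i}e_i,\varphi\xi_\zeta\zeta\rangle$ was established; feeding in $(\xi_{(4)e_i}e_i)^\flat=\tfrac{n-1}{2}\theta$ (so here $n>1$ is used) this rearranges to $\Div(\varphi\xi_\zeta\zeta)=\langle d\xi_\zeta\eta,F\rangle+(n-1)\theta(\varphi\xi_\zeta\zeta)$, whence $\Div(\varphi\xi_\zeta\zeta)=(n-1)\theta(\varphi\xi_\zeta\zeta)$.

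Items $(\mathrm{ii})$--$(\mathrm{iv})$ are the same inspection applied to the $[\lambda^{1,1}]$- and $\lcf\lambda^{2,0}\rcf$-formulas of Lemma \ref{comp12}. In $(\mathrm{ii})$ the type satisfies $\xi_{(7)}=\xi_{(10)}=0$ (first option) or $\xi_{(7)}=\xi_{(9)}=\xi_{(11)}=0$ (second option), and in addition either $\mathcal{C}_6$ is absent (when $x=8$, so $d^*F(\zeta)\equiv 0$) or $\mathcal{C}_8$ is absent (when $x=6$); in every subcase each summand of $(d\xi_\zeta\eta)_{[\lambda^{1,1}]}$ that is not proportional to $F$ drops out, which is precisely $(d\xi_\zeta\eta)_{[\lambda^{1,1}_0]}=0$. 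Part $(\mathrm{iii})$ is the sub-case $x=8$ of $(\mathrm{ii})$, so its traceless $[\lambda^{1,1}]$-part vanishes; moreover $\mathcal{C}_6$ is now absent, hence — arguing exactly as in $(\mathrm{i})$ — $\langle d\xi_\zeta\eta,F\rangle=0$ too, giving the full $(d\xi_\zeta\eta)_{[\lambda^{1,1}]}=0$. Part $(\mathrm{iv})$ is the parallel inspection of the $\lcf\lambda^{2,0}\rcf$-formula: since $\mathcal{C}_{10}$ is absent the terms carrying $\xi_{(10)}$ disappear, and for each of the three admissible pairs $(x,y)$ the remaining quadratic terms — of the shapes $d^*F(\zeta)\,\xi_{(11)}$, $\xi_{(7)}\,\xi_{(9)}$ and $\xi_{(7)}\,\xi_{(11)}$ — each have a vanishing factor.

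Finally $(\mathrm{v})$ is a bookkeeping corollary: the type there is one of the two types appearing in $(\mathrm{iii})$ and also realises the pair $(x,y)=(9,11)$ of $(\mathrm{iv})$, hence $(d\xi_\zeta\eta)_{[\lambda^{1,1}]}=0$ and $(d\xi_\zeta\eta)_{\lcf\lambda^{2,0}\rcf}=0$ at once, leaving only the $\eta\wedge\lcf\lambda^{1,0}\rcf$-component, i.e.\ $d\xi_\zeta\eta\in\eta\wedge\lcf\lambda^{1,0}\rcf$. All of this is mechanical once Lemma \ref{comp12} and Proposition \ref{mainid} are in hand; the one place that needs genuine care is the divergence identity in $(\mathrm{i})$, where one must extract from the proof of Proposition \ref{mainid} the precise relation between $((\nabla^{\Lie{U}(n)}_{e_i}\xi_{(12)})_\zeta\eta)(\varphi e_i)$, $\Div(\varphi\xi_\zeta\zeta)$ and $\theta(\varphi\xi_\zeta\zeta)$ and watch the normalisation of $\theta$. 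The only other risk is organisational — for each of the many enumerated types one must verify \emph{every} summand of the relevant formula, not merely the visibly present ones.
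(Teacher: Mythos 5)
Your proposal is correct and follows essentially the same route as the paper, which simply states that all parts are direct consequences of Lemma \ref{comp12} (together, for the divergence identity in $(\mathrm{i})$, with the relation for $((\nabla^{\Lie{U}(n)}_{e_i}\xi_{(12)})_\zeta\eta)(\varphi e_i)$ established in the proof of Proposition \ref{mainid}). Your term-by-term verification — linear terms killed by $d^*F(\zeta)\equiv 0$ when $\mathcal{C}_6$ is absent, quadratic terms killed because no listed type contains both factors — is exactly the intended bookkeeping.
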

\begin{proof}
All parts are direct consequences of Lemma \ref{comp12}.
\end{proof}

\section{Non-existence of certain types of almost contact metric structures}\setcounter{equation}{0}
The main purpose of this section is to prove that there are certain types of  almost contact metric structure,  which were initially possible from algebraic point of view,  that  do not exist because of geometry. A first result in this sense was proved by Marrero in  \cite{JCMAR}. He showed that if a connected  manifold of dimension higher than $3$ is equipped with an almost contact metric structure of  type $\mathcal{C}_5 \oplus \mathcal{C}_6$, then it must be of one of  the singles types $\mathcal{C}_5$ or  $\mathcal{C}_6$. This is a particular case of the result proved here.

\begin{theorem} \label{mainmain}  For a connected almost contact metric manifold of dimension $2n+1$, $n>1$,  we have:
\vspace{2mm}

\noindent $(i)$  If the  structure is of type $\mathcal{C}_1 \oplus \mathcal{C}_2 \oplus \mathcal{C}_3 \oplus  \mathcal{C}_5 \oplus \mathcal{C}_6 \oplus \mathcal{C}_8 \oplus \mathcal{C}_9\oplus \mathcal{C}_{11}\oplus \mathcal{C}_{12}$ with $\langle d\xi_\zeta \eta, F \rangle =0$ and $(\xi_{(5)}, \xi_{(6)} ) \neq (0,0)$, then it is of type  $\mathcal{C}_1 \oplus \mathcal{C}_2 \oplus \mathcal{C}_3  \oplus \mathcal{C}_5 \oplus \mathcal{C}_8 \oplus \mathcal{C}_9 \oplus \mathcal{C}_{11}\oplus \mathcal{C}_{12}$, i.e. $\xi_{(6)}=0$, or of type 
$  \mathcal{C}_2 \oplus   \mathcal{C}_6   \oplus \mathcal{C}_9 \oplus \mathcal{C}_{12} $ with $\xi_{(6)} \neq 0$. Likewise, for this last type $($with the previously fixed condition $\langle d\xi_\zeta \eta, F \rangle =0$$)$,  $d(d^*F(\zeta)) = d^*F(\zeta) \,  \xi_\zeta \eta$  and  $\xi_\zeta \eta$ is closed.
\vspace{2mm}

\noindent $(ii)$ If the  structure is of type $\mathcal{C}_2 \oplus \mathcal{C}_5 \oplus \mathcal{C}_7 \oplus  \mathcal{C}_9 $ being $(\xi_{(5)}, \xi_{(7)} ) \neq (0,0)$,  then it is of type  $\mathcal{C}_2 \oplus \mathcal{C}_7 \oplus \mathcal{C}_9$, or of type 
$  \mathcal{C}_2 \oplus   \mathcal{C}_5   \oplus \mathcal{C}_9  $. 
  
\end{theorem}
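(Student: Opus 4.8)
The plan is to exploit that neither type contains $\mathcal{C}_4$, so the Lee form $\theta$ vanishes identically and every $\Lie{U}(n)$-component of $d\theta$ is zero; this turns Propositions \ref{divergenciaunouno} and \ref{divergenciadoscero} and Lemma \ref{lambdaunouno} into purely algebraic–differential identities among the remaining $\xi_{(i)}$. First I would read off the $\mathbb{R}F$-part of Proposition \ref{divergenciaunouno} (with $\xi_{(7)}=\xi_{(10)}=0$), which in case $(i)$ gives $d^*\eta\,d^*F(\zeta)=0$ on $M$; and the $[\lambda^{1,1}]$-identity of Lemma \ref{lambdaunouno}, specialised to type $(ii)$ (where $\xi_{(1)}=\xi_{(3)}=\xi_{(4)}=\xi_{(6)}=\xi_{(8)}=\xi_{(10)}=\xi_{(11)}=\xi_{(12)}=0$ and $d^*F(\zeta)=0$), collapses to $0=\tfrac4n d^*\eta\,(\xi_{(7)X}\eta)(Y)$, i.e.\ $d^*\eta\cdot\xi_{(7)}=0$ on $M$. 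So in both cases the zero sets of the two competing pieces of torsion are disjoint at every point, and the task is to upgrade this to the global dichotomy.

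For $(i)$ I would then feed $\theta=0$, $\langle d\xi_\zeta\eta,F\rangle=0$ and $d^*\eta\,d^*F(\zeta)=0$ into the identity \eqref{dstarefezeta} of Proposition \ref{mainid}, obtaining $d(d^*F(\zeta))=d^*F(\zeta)\,\xi_\zeta\eta$ on all of $M$. This is a linear recurrence for the scalar $d^*F(\zeta)$, so integrating along paths and using connectedness shows $d^*F(\zeta)$ is either identically zero — giving the first alternative $\mathcal{C}_1\oplus\mathcal{C}_2\oplus\mathcal{C}_3\oplus\mathcal{C}_5\oplus\mathcal{C}_8\oplus\mathcal{C}_9\oplus\mathcal{C}_{11}\oplus\mathcal{C}_{12}$ — or nowhere zero. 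In the latter case $d^*\eta\equiv0$, so $\xi_{(5)}=0$, and $d^2(d^*F(\zeta))=0$ together with $d(d^*F(\zeta))=d^*F(\zeta)\,\xi_\zeta\eta$ and $d^*F(\zeta)\neq0$ forces $d\xi_\zeta\eta=0$. Lemma \ref{comp12} (with $\xi_{(7)}=\xi_{(10)}=0$, $d^*\eta=0$) then shows that the $[\lambda^{1,1}_0]$-part of $d\xi_\zeta\eta$ is a nonzero multiple of $d^*F(\zeta)$ times the primitive $(1,1)$-form attached to $\xi_{(8)}$ and its $\lcf\lambda^{2,0}\rcf$-part is a nonzero multiple of $d^*F(\zeta)\langle\xi_{(11)\zeta}\,\cdot\,,\varphi\,\cdot\,\rangle$, whence $\xi_{(8)}=0$ and $\xi_{(11)}=0$. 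The structure is now of type $\mathcal{C}_1\oplus\mathcal{C}_2\oplus\mathcal{C}_3\oplus\mathcal{C}_6\oplus\mathcal{C}_9\oplus\mathcal{C}_{12}$; a direct computation gives $d\eta=\tfrac1n d^*F(\zeta)\,F-\eta\wedge\xi_\zeta\eta$, and substituting this, $d\xi_\zeta\eta=0$ and $d(d^*F(\zeta))=d^*F(\zeta)\,\xi_\zeta\eta$ into $d^2\eta=0$ yields $d^*F(\zeta)\,dF=0$, hence $dF=0$. Since only $\xi_{(1)}$, $\xi_{(3)}$ and $\xi_{(4)}$ feed the purely horizontal part $\lcf\lambda^{3,0}\rcf\oplus\lcf\lambda^{2,1}_0\rcf\oplus\lcf\lambda^{1,0}\rcf\wedge F$ of $dF$ and $\xi_{(4)}=0$ here, $dF=0$ forces $\xi_{(1)}=\xi_{(3)}=0$, leaving $\mathcal{C}_2\oplus\mathcal{C}_6\oplus\mathcal{C}_9\oplus\mathcal{C}_{12}$ with $\xi_{(6)}\neq0$; the ``Likewise'' clause is exactly the two identities $d(d^*F(\zeta))=d^*F(\zeta)\,\xi_\zeta\eta$ and $d\xi_\zeta\eta=0$ already obtained.

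For $(ii)$, pointwise disjointness of $\{\xi_{(5)}\neq0\}=\{d^*\eta\neq0\}$ and $\{\xi_{(7)}\neq0\}$ shows the structure is of type $\mathcal{C}_2\oplus\mathcal{C}_5\oplus\mathcal{C}_9$ on the open set $\{d^*\eta\neq0\}$ (so $d\eta=0$ there) and of type $\mathcal{C}_2\oplus\mathcal{C}_7\oplus\mathcal{C}_9$ on $\{\xi_{(7)}\neq0\}$ (so $d^*\eta=0$ there); moreover Proposition \ref{divergencia}, with $\theta=0$ and $\mathcal{C}_{12}=0$, gives $d(d^*\eta)=\zeta(d^*\eta)\,\eta$ on $M$, so $d^*\eta$ is constant along the leaves of $\zeta^\perp$ wherever $d\eta$ vanishes and $d^*\eta\cdot\eta$ is a closed $1$-form. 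I would then argue, by a connectedness argument in the spirit of Marrero's for trans-Sasakian structures, that one of the open sets $\{d^*\eta\neq0\}$, $\{\xi_{(7)}\neq0\}$ is empty: either $d^*\eta\equiv0$, giving type $\mathcal{C}_2\oplus\mathcal{C}_7\oplus\mathcal{C}_9$, or $d^*\eta$ is nowhere zero, forcing $\xi_{(7)}\equiv0$ and type $\mathcal{C}_2\oplus\mathcal{C}_5\oplus\mathcal{C}_9$.

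The routine parts are the $\Lie{U}(n)$-projections of $d^2F$ and $d^2\eta$ — all packaged in the quoted results — and the computation of $d\eta$. The hard part is the passage from the pointwise products ($d^*\eta\,d^*F(\zeta)=0$ in $(i)$, $d^*\eta\cdot\xi_{(7)}=0$ in $(ii)$) to the global dichotomy: in $(i)$ the scalar recurrence $d(d^*F(\zeta))=d^*F(\zeta)\,\xi_\zeta\eta$ makes this immediate, but in $(ii)$, where the competing torsion $\xi_{(7)}$ is not a scalar, I expect the delicate connectedness argument to be the real obstacle.
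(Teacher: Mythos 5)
Your part (i) is correct and follows essentially the paper's route. The two places where you deviate are both legitimate: you extract $d^*\eta\, d^*F(\zeta)=0$ from the $\mathbb{R}F$-part of Proposition \ref{divergenciaunouno} with $\theta=0$, whereas the paper gets it by applying $d$ to \eqref{dstarefezeta} and isolating the $\mathbb{R}F$-summand $\tfrac{1}{n^2}d^*\eta\,d^*F(\zeta)^2 F$ of the resulting identity (this is where the hypothesis $\langle d\xi_\zeta\eta, F\rangle=0$ enters for the paper); and your dichotomy for $d^*F(\zeta)$ via the linear ODE $d(d^*F(\zeta))=d^*F(\zeta)\,\xi_\zeta\eta$ along paths is in fact cleaner than the paper's argument, which asserts that $\{d^*F(\zeta)=0\}$ and $\{d^*\eta=0\}$ are complementary open sets. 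Everything downstream ($\xi_{(8)}=\xi_{(11)}=0$ from Lemma \ref{comp12}, then $d^*F(\zeta)\,dF=0$ from $d^2\eta=0$ and hence $\xi_{(1)}=\xi_{(3)}=0$) matches the paper step for step.

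Part (ii) has a genuine gap, which you flag yourself: you obtain the pointwise relation $d^*\eta\cdot\xi_{(7)}=0$ and the identity $d(d^*\eta)=d(d^*\eta)(\zeta)\,\eta$ from Proposition \ref{divergencia}, but then only promise ``a connectedness argument in the spirit of Marrero's'' and call it the real obstacle. That missing step is precisely the content of the theorem: pointwise disjointness of $\{\xi_{(5)}\neq 0\}$ and $\{\xi_{(7)}\neq 0\}$ alone does not force one of them to be empty. The paper's mechanism is short and you should reproduce it: for this type $d\eta=-2\xi_{(7)}\eta$, so $d^*\eta\cdot\xi_{(7)}=0$ is the tensorial identity $d^*\eta\, d\eta=0$; applying $d$ and substituting $d(d^*\eta)=d(d^*\eta)(\zeta)\,\eta$ gives $d(d^*\eta)(\zeta)\,\eta\wedge d\eta=0$. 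Hence on the open set where $\xi_{(7)}\neq 0$ (where $\eta\wedge d\eta\neq 0$) not only $d^*\eta$ but also its full differential $d(d^*\eta)=d(d^*\eta)(\zeta)\,\eta$ vanishes, and this is what the paper uses to propagate $d^*\eta\equiv 0$ over the whole connected manifold, yielding the dichotomy $\xi_{(7)}\neq 0$ everywhere with $\xi_{(5)}=0$, or $\xi_{(7)}=0$ everywhere. Without supplying this (or an equivalent) second differential identity, your part (ii) proves only the pointwise statement, not the global one.
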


\begin{proof} 
 For $(i)$, from  \eqref{dstarefezeta}, one has
$
  d(d^* F(\zeta)) =  d^*F(\zeta) \xi_\zeta \eta + \tfrac{1}{n} d^*\eta d^*F(\zeta) \eta. 
$
 This implies 
 \begin{align*} 
 0  = &   \tfrac{1}{n} d^*\eta d^*F(\zeta) \eta \wedge  \xi_\zeta \eta + d^*F(\zeta) d \xi_\zeta \eta   + \tfrac1{n} d^*F(\zeta)  d(d^*\eta) \wedge \eta   + \tfrac{1}{n} d^*\eta d^*F(\zeta)  \xi_\zeta \eta \wedge \eta  
   \\
   &
    +\tfrac{1}{n^2} d^* \eta   d^* F( \zeta)^2  F + \tfrac{1}{n} d^*\eta d^*F(\zeta)  \xi_\zeta \eta \wedge \eta.
\end{align*}
Therefore,
\begin{align*} 
 0  =   
  d^*F(\zeta) d \xi_\zeta \eta 
   + \tfrac1{n} d^*F(\zeta)  d(d^*\eta) \wedge \eta   
    +\tfrac{1}{n^2} d^* \eta   d^* F( \zeta)^2  F + \tfrac{1}{n} d^*\eta d^*F(\zeta)  \xi_\zeta \eta \wedge \eta.
\end{align*}
Since the third  summand must be zero, one has $d^*\eta d^*F(\zeta) =0$. From this it follows 
$$
 d(d^* F(\zeta)) =  d^*F(\zeta) \xi_\zeta \eta, \qquad   d^*F(\zeta) d \xi_\zeta \eta  = 0, \qquad d^*F(\zeta)  d(d^*\eta)  = 0.
$$

On the other hand, since $d^*\eta$ and $d^*F(\zeta)$ determine  $\xi_{(5)}$ and $\xi_{(6)}$, respectively, the condition $(\xi_{(5)}, \xi_{(6)}) \neq (0,0)$ implies $(d^*\eta , d^* F(\zeta) ) \neq (0,0)$. From this and  $d^*\eta d^*F(\zeta) =0$, it follows that set $A$ of those points such that $d^*F(\zeta) =0$ coincides with set of points such $d^*\eta \neq 0$. Hence $A$ is open. Likewise the set $B$ consisting of points such $d^*\eta=0$ coincides with the set of points such that  $d^*F(\zeta) \neq 0$. Thus $B$ is open. Since the manifold is connected, $A$ or $B$ must be empty. Therefore,  $d^*F(\zeta) =0$  on the whole manifold, or $d^*\eta =0$  on the whole manifold.

If  $d^* F(\zeta)$ is non-zero, then   $ d(\xi_\zeta \eta) = 0$, $d^*\eta =0 $ and $ d(d^*\eta)=0$   on the whole connected manifold. Hence $\xi_{(5)}=0$, $\xi_{(6)}\neq 0$ and, using Lemma \ref{comp12},  we have 
   \begin{gather*}
0 =   (d \xi_{\zeta}  \eta)_{[\lambda^{1,1}] }  (X, Y) = 
            \textstyle  \frac{2}{n} d^*F(\zeta)  (\xi_{(8) X} \eta) (\varphi Y),
            \\
            0 = (d\xi_{\zeta}  \eta)_{\lcf \lambda^{2,0} \rcf } ( X, Y)  = 
   -   \textstyle  \frac2{n} d^*F(\zeta) \langle \xi_{(11)\zeta} X , \varphi Y \rangle.
   \end{gather*}      
Therefore, $\xi_{(8)} = 0 $ and   $\xi_{(11)}=0$.  
 
Finally, since $d \eta = \frac{d^*F(\zeta)}{n} F +  \xi_{\zeta} \eta \wedge \eta  $, we have
$
0  = \tfrac{1}{n} d^*F(\zeta) d F.  
$   
 This implies     $\xi_{(1)} =0$ and $\xi_{(3)} =0$ (see comments  about $dF$ in Section \ref{ejemplo}).

 For $(ii)$, from Lemma \ref{lambdaunouno}, we have $d^*\eta (\xi_{(7)} \eta)=0$ which, in this case, is equivalent to $d^*\eta \, d\eta =0$. Then, doing the exterior derivative in both sides and taking Proposition \ref{divergencia} into account, we obtain 
 $
 d(d^*\eta)(\zeta) \eta \wedge d \eta =0, 
 $
 where $d( d^*\eta) = d(d^*\eta)(\zeta) \eta$. If there is a point where $\xi_{(7)} \neq 0$, then $d^* \eta=0$ and  $d( d^*\eta)=0$ on the whole manifold. Thus  $\xi_{(7)} \neq 0$ everywhere with $\xi_{(5)} = 0$ or   $\xi_{(7)} = 0$ everywhere with $\xi_{(5)} \neq 0$.
\end{proof}

The following result is an immediate consequence of previous Theorem.
\begin{corollary}
For a connected almost contact metric manifold of dimension $2n+1$, $n>1$,  if the  structure is of type $\mathcal{C}_1 \oplus \mathcal{C}_2 \oplus \mathcal{C}_3 \oplus  \mathcal{C}_5 \oplus \mathcal{C}_6 \oplus \mathcal{C}_8 \oplus \mathcal{C}_9\oplus \mathcal{C}_{11}$ with  $(\xi_{(5)}, \xi_{(6)} ) \neq (0,0)$, then it is of type  $\mathcal{C}_1 \oplus \mathcal{C}_2 \oplus \mathcal{C}_3  \oplus \mathcal{C}_5 \oplus \mathcal{C}_8 \oplus \mathcal{C}_9 \oplus \mathcal{C}_{11}$ or of type 
$  \mathcal{C}_2 \oplus   \mathcal{C}_6   \oplus \mathcal{C}_9 $.  
\end{corollary}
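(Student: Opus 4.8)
The plan is to obtain this Corollary as a direct specialization of Theorem \ref{mainmain}(i), the only point being to check that the extra hypothesis $\langle d\xi_\zeta\eta,F\rangle=0$ appearing there is automatically satisfied in the present setting. First I would observe that a structure of type $\mathcal{C}_1\oplus\mathcal{C}_2\oplus\mathcal{C}_3\oplus\mathcal{C}_5\oplus\mathcal{C}_6\oplus\mathcal{C}_8\oplus\mathcal{C}_9\oplus\mathcal{C}_{11}$ is in particular a structure of type $\mathcal{C}_1\oplus\mathcal{C}_2\oplus\mathcal{C}_3\oplus\mathcal{C}_5\oplus\mathcal{C}_6\oplus\mathcal{C}_8\oplus\mathcal{C}_9\oplus\mathcal{C}_{11}\oplus\mathcal{C}_{12}$ in which, moreover, $\xi_{(12)}=0$.

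Next I would recall that $\xi_{(12)}$ is the component of the intrinsic torsion determined by the one-form $\xi_\zeta\eta$, so that $\xi_{(12)}=0$ is equivalent to $\xi_\zeta\eta=0$. Hence $d\xi_\zeta\eta=0$, and in particular $\langle d\xi_\zeta\eta,F\rangle=0$. Thus all the hypotheses of Theorem \ref{mainmain}(i) hold: the structure is of type $\mathcal{C}_1\oplus\cdots\oplus\mathcal{C}_{11}\oplus\mathcal{C}_{12}$, one has $\langle d\xi_\zeta\eta,F\rangle=0$, and $(\xi_{(5)},\xi_{(6)})\neq(0,0)$ by assumption.

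Applying Theorem \ref{mainmain}(i) then gives that the structure is of type $\mathcal{C}_1\oplus\mathcal{C}_2\oplus\mathcal{C}_3\oplus\mathcal{C}_5\oplus\mathcal{C}_8\oplus\mathcal{C}_9\oplus\mathcal{C}_{11}\oplus\mathcal{C}_{12}$ or of type $\mathcal{C}_2\oplus\mathcal{C}_6\oplus\mathcal{C}_9\oplus\mathcal{C}_{12}$. Since in either case we still have $\xi_{(12)}=0$ (this component was never reintroduced along the way), the $\mathcal{C}_{12}$ summand is trivial and may be omitted, which yields exactly the two types in the statement. There is essentially no obstacle in this argument; the only thing to be careful about is precisely this last bookkeeping remark, namely that the $\mathcal{C}_{12}$ term occurring in the conclusion of Theorem \ref{mainmain}(i) is vacuous here and hence can be dropped.
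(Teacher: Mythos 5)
Your argument is correct and is exactly the intended one: the paper derives the Corollary as an ``immediate consequence'' of Theorem \ref{mainmain}, and the content of that immediacy is precisely your observation that $\xi_{(12)}=0$ forces $\xi_\zeta\eta=0$, hence $\langle d\xi_\zeta\eta,F\rangle=0$, so Theorem \ref{mainmain}(i) applies and the $\mathcal{C}_{12}$ summands in its conclusion are vacuous. No gaps.
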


\begin{remark} \label{nonexist}
{\rm On a connected almost contact metric manifold of dimension $2n+1$, $n>1$,  the non-existence of  structures  of type $\mathcal{C}_1 \oplus \mathcal{C}_2 \oplus \mathcal{C}_3 \oplus  \mathcal{C}_5 \oplus \mathcal{C}_6 \oplus \mathcal{C}_8 \oplus \mathcal{C}_9\oplus \mathcal{C}_{11}$ with  $\xi_{(5)}\neq 0$ and $\xi_{(6)}\neq 0$ implies that $2^6$ types do not exist. On the other hand, by the previous Corollary, the type  $\mathcal{C}_1 \oplus \mathcal{C}_2 \oplus \mathcal{C}_3  \oplus \mathcal{C}_6 \oplus \mathcal{C}_8 \oplus \mathcal{C}_9\oplus \mathcal{C}_{11}$ with $\xi_{(6)} \neq 0$ must be necessarily the type $ \mathcal{C}_2  \oplus \mathcal{C}_6  \oplus \mathcal{C}_9$. From this it is deduced the non-existence of $2^4$ types in each one of the cases:

(i) $\xi_{(6)}\neq 0$, $\xi_{(2)}\neq 0$, $\xi_{(9)}\neq 0$;
\hspace{1cm} (ii) $\xi_{(6)}\neq 0$, $\xi_{(2)}\neq 0$, $\xi_{(9)}= 0$;

(iii) $\xi_{(6)}\neq 0$, $\xi_{(2)} = 0$, $\xi_{(9)} \neq 0$;
\hspace{1cm} 
(iv) $\xi_{(6)}\neq 0$, $\xi_{(2)} = 0$, $\xi_{(9)} = 0$;

\noindent  which are different to the previous ones. Finally, from Theorem \ref{mainmain} (ii), it turns out the non-existence of $2^2$ another different types.
All of this implies that, for higher dimensions, the possible number of classes to really consider is $2^{12} - (2^6+ 2^4 . 2^2 + 2^2) = 3964$. That is, $132$ classes do not properly exist because of  geometry. At the beginning, the number of  algebraically possible classes is $2^{12}=4096$.

 The type $\mathcal{C}_2 \oplus \mathcal{C}_6 \oplus \mathcal{C}_9 $ could be referred to as  \textit{almost $a$-Sasakian} structure, where $a= \frac{d^*F(\zeta)}{n}$ which is constant for this type in case of $n>1$. Almost Sasakian structure, $a=1$, have been  considered in references (see \cite{Ogiue,JanVan}).  
}
\end{remark}

\section{Examples} \label{ejemplo}
In this section we will display some examples. But previously,  we will describe some preliminary material which will help us to understand them. 
We will begin by computing some  components of the intrinsic torsion $\xi$  by using exterior algebra. Thus we will compute  the $\Lie{U}(n)$-components of the exterior derivatives $d \eta$ and $dF$. To complete the  information about $\xi$, we will also  need  to compute   $\Lie{U}(n)$-components  of the Nijenhuis tensor $N_\varphi$ of the tensor $\varphi$.

The form $d \eta$ is in $\Lambda^2 T^*M = \mathbb{R} F \oplus [ \lambda^{1,1}_0] \oplus \lcf \lambda^{2,0}\rcf \oplus  \eta \wedge \lcf \lambda^{1,0}\rcf$. Using the $\Lie{U}(n)$-map $\xi \to \mbox{alt} (\xi \eta) $, where  $\mbox{alt} (\xi \eta)(X,Y) = (\xi_X \eta) (Y) - (\xi_Y \eta)(X) = - (\nabla_X \eta)(Y) + (\nabla_Y \eta)(X) = - d \eta (X,Y)$, the information about some components of $\xi$ is translated to  the components of $d \eta$: $d\eta_{\mathbb{R} F } = - \mbox{alt} (\xi_{(6)} \eta) = - 2 \xi_{(6)} \eta$,  $\;d\eta_{ [ \lambda^{1,1}_0] } = -\mbox{alt} (\xi_{(7)} \eta) = - 2 \xi_{(7)} \eta $ of $\xi_{(7)}$,  $\; d\eta_{ \lcf \lambda^{2,0}\rcf } =- \mbox{alt} (\xi_{(10)} \eta) = - 2 \xi_{(10)} \eta$ and  $\;d\eta_{ \eta  \wedge \lcf  \lambda^{1,0} \rcf  } = - \mbox{alt} (\xi_{(12)} \eta) = \xi_{( 12) \zeta} \eta \wedge \eta$. 

The form  $dF$ is in $\Lambda^3 T^* M$ which is decomposed as in \eqref{tresformas}. In a similar way as above,  by  using $\Lie{U}(n)$-map $\xi \to \mbox{alt} (\xi F) $, where $\mbox{alt} (\xi F)(X,Y,Z) = (\xi_X F) (Y,Z) +  (\xi_Z F) (X,Y) +  (\xi_Y F) (Z,X) = - d F (X,Y,Z)$,  the information about some components of $\xi$ is translated to  the components of $d F$:  
$\; d F_{\lcf \lambda^{3,0}\rcf } = - \mbox{alt} (\xi_{(1)} F) $, $\;d F_{\lcf \lambda^{2,1}_0\rcf } =  - \mbox{alt} (\xi_{(3)} F) $,  $\; d F_{\lcf \lambda^{1,0}\rcf  \wedge   F } = - \mbox{alt} (\xi_{(4)} F)$,  $d F_{\mathbb{R} F\wedge \eta  } = - \mbox{alt} (\xi_{(5)} F)  $, $\; dF_{ [ \lambda^{1,1}_0] \wedge \eta } = - \mbox{alt} (\xi_{(8)} F) $  and $\; dF_{ \lcf \lambda^{2,0} \rcf \wedge \eta} = - \mbox{alt} ( (\xi_{(10)} + \xi_{(11)})  F) $. Note that the  component   $dF_{ \lcf \lambda^{2,0} \rcf \wedge \eta }$  contains 
 partial information of  $\xi_{(10)}$ and partial information of $\xi_{(11)}$. This is because some diagonal of the space $\mathcal{C}_{10} \oplus \mathcal{C}_{11}$ is included in $\mathrm{ker} (\mbox{alt} (\xi F))$ and  $\mbox{alt} (\xi F) (\mathcal{C}_{10} \oplus \mathcal{C}_{11})$ is the  isomorphic image of the  orthogonal complementary in $\mathcal{C}_{10} \oplus \mathcal{C}_{11}$ of the mentioned   diagonal, $dF_{ \lcf \lambda^{2,0} \rcf \wedge \eta } =  \eta \wedge \left( 2 \xi_{(10)} \eta \circ \varphi  - \xi_{(11)} F \right)$.  However,  if one considers together $d\eta_{ \lcf \lambda^{2,0}\rcf }$ and   $dF_{ \lcf \lambda^{2,0} \rcf \wedge \eta}$,   the whole information about  both,  $\xi_{(10)}$ and  $\xi_{(11)}$, is available.
 
 From all of this, $\xi_{(2)}$ and $\xi_{(9)}$ are  the only  components of the intrinsic torsion about we have no information yet. Thus we need also to consider the \textit{Nijenhuis tensor} $N_\varphi$ of $\varphi$. It is  defined by  $N_\varphi(X,Y) = - \varphi^2 [X,Y] - [\varphi X, \varphi Y] + \varphi [\varphi X, Y] + \varphi [X,\varphi Y ]$ and  contains information about $\xi_{(2)}$ and $\xi_{(9)}$. Hence by analyzing $d\eta$, $dF$ and $N_\varphi$, we will completely determine $\xi$ and locate the type of almost contact metric structure.
 
 Next  we  describe some properties of the tensor  $N_\varphi$ (see \cite{ChineaJCMarr}). The tensor   $N_\varphi$ is in $\Lambda^2 \mathrm{T}^* M \otimes \mathrm{T} M$ and satisfies the properties:
 \begin{align*}
 N_\varphi(\varphi X ,\varphi Y)  =&  -N_\varphi (X,Y) + \eta(X) N_\varphi (\zeta , Y) -  \eta(Y) N_\varphi (\zeta,X) 
\\
  &+  d \eta (\varphi X, \varphi Y) \zeta +  d \eta (\varphi^2 X, \varphi^2 Y) \zeta, 
 \\
 \varphi  N_\varphi( X ,  Y)  = & -N_\varphi (X, \varphi Y) +\eta(Y) N_\varphi (\zeta,\varphi X)  + d\eta (\varphi X , \varphi^2 Y) \zeta,
 \\
\eta(N_\varphi (X,Y)) \zeta   = & d \eta (\varphi X , \varphi Y) \zeta. 
 \end{align*}
Note that, in particular, $N_\varphi(\zeta  ,\varphi X )  =  -  \varphi N_\varphi(\zeta  , X )$ and $\eta ( N_\varphi(\zeta  ,  X ))    =  0$. 
If we consider the almost contact structure without metric, just as a $\Lie{GL}(n, \mathbb{C})$-structure, the tensor $N_\varphi$ is in 
$W_1 \oplus  W_2 \oplus  W_3 \oplus W_4$ where
\begin{eqnarray*}
W_1 & = & \{ t \in \Lambda^2 \mathrm{T}^* M \otimes \mathrm{T} M \, | \, t(\varphi X , \varphi Y ) =  - t( X , Y ), \; \;\;  \varphi t( X ,  Y ) =  - t( X , \varphi Y )\},\\
W_2 & = & \{ t \in \Lambda^2 \mathrm{T}^* M \otimes \mathrm{T} M \, | \, t( X ,  Y ) =  \eta(X)  t( \zeta  , Y ) - \eta(Y) t(\zeta , X), \; \;\;  \eta  (t( X ,  Y )) =  0\},\\
W_3 & = & \{ t \in \Lambda^2 \mathrm{T}^* M \otimes \mathrm{T} M \, | \, t( X ,  Y ) =  \eta ( t( X  , Y )) \zeta, \; \;\; t(\varphi X , \varphi Y ) =   t( X , Y ) \}, 
\\
W_4 & = & \{ t \in \Lambda^2 \mathrm{T}^* M \otimes \mathrm{T} M \, | \, t( X ,  Y ) =  \eta ( t( X  , Y )) \zeta, \; \;\; t(\varphi X , \varphi Y ) =  - t( X , Y ) \}. 
\end{eqnarray*}
The  $\Lie{GL}(n, \mathbb{C})$-components of $N_\varphi$ are given by 
 \begin{eqnarray*}
 N_{\varphi W_1}(X, Y ) & = & N_\varphi (X,Y) -  \eta(X) N_\varphi (\zeta , Y) +  \eta(Y) N_\varphi (\zeta , X) -  d \eta (\varphi X , \varphi Y) \zeta,
 \\
N_{\varphi W_2}(X, Y ) & = & \eta(X) N_\varphi (\zeta , Y) -  \eta(Y) N_\varphi (\zeta , X) ,
\\
N_{\varphi W_3 }(X, Y ) & = &   \tfrac12 (d \eta (\varphi X , \varphi Y) + d \eta (\varphi^2 X , \varphi^2 Y)) \zeta, \\
N_{\varphi W_4 }(X, Y ) & = &  \tfrac12 (d \eta (\varphi X , \varphi Y) - d \eta (\varphi^2 X , \varphi^2 Y)) \zeta. 
\end{eqnarray*}
\begin{remark}
{\rm For an almost contact structure, the structure tensor (a notion defined in \cite{Fuji1,Fuji2}) is determined by the part $T_{\lie{c}}$ of the torsion $T$ of a $\Lie{GL}(n, \mathbb{C})$-connection in   the $\Lie{GL}(n, \mathbb{C})$-complementary  part $\lie{c}$  of the image by the Spencer operator of $\mathrm{T}^* M \otimes \lie{gl}(n, \mathbb{C})$ in  $\Lambda^2 \mathrm{T}^* M \otimes \mathrm{T} M$. 
It  turns  out $\lie{c} = W_1 \oplus  W_2 \oplus  W_3 \oplus W_4 \oplus W_5$, where
$$
W_5 = \{ t \in   \Lambda^2 \mathrm{T}^* M \otimes \mathrm{T} M \, | \, t( X ,  Y ) =  \eta(X)  \eta(t( \zeta  , Y )) \zeta - \eta(Y) \eta(t(\zeta , X)) \zeta \}. 
$$
The  $\Lie{GL}(n, \mathbb{C})$-components of $T_{\lie{c}}$ are given by 
\begin{eqnarray*}
8 T_{W_1} (X,Y) & = &  - 2 N_\varphi (X,Y) +  \eta(X) N_\varphi (\zeta , Y)  -  \eta(Y) N_\varphi (\zeta , X) + 2 d \eta ( \varphi X, \varphi Y))
\\
 2 T_{W_2} (X , Y) & = & -  \eta(X)   N_\varphi (\zeta , Y) +   \eta(Y)   N_\varphi (\zeta , X) 
\\
2 T_{W_3} (X,Y) & = &   ( d \eta ( \varphi^2 X, \varphi^2 Y) + d \eta ( \varphi X, \varphi Y)) \zeta  \\
2 T_{W_4} (X,Y) & = &   ( d \eta ( \varphi^2 X, \varphi^2 Y)   -  d \eta ( \varphi  X, \varphi Y)  ) \zeta   \\
T_{W_5} (X,Y) & = &   (\eta \wedge (\zeta \lrcorner d \eta)) (X,Y) \zeta.
\end{eqnarray*}
The existence of a torsion free $\Lie{GL}(n, \mathbb{C})$-connection is equivalent to the vanishing of the structure tensor. Therefore, in the case of almost contact structure it is equivalent  to $N_\varphi=0$ and $d \eta =0$.  These last conditions are also  equivalent to the integrability of the almost contact structure. We recall that, in general, the integrability of a $\Lie{G}$-structure implies that the corresponding structure tensor is zero. The converse is not true. Only for certain particular cases, as almost complex  structure,  the vanishing of the structure tensor implies  integrability (Newlander-Nirenberg's Theorem). For almost contact structures, this is also the case. 
 }
\end{remark}

Now, in the presence of a compatible metric,  one can relate $N_\varphi$ with $\xi$ by the $\Lie{U}(n)$-map 
$
\xi   \to N(\xi)$, where 
$$
 N(\xi)(X,Y) =  - \varphi ( \xi _X \varphi) Y +   \varphi ( \xi _Y \varphi) X +   ( \xi _{\varphi X} \varphi) Y  - ( \xi _{\varphi Y} \varphi) X = N_\varphi (X,Y).
$$ 
It turns out that $ \mathrm{ker}(N)  =   \mathcal{C}_3 \oplus \mathcal{C}_4 \oplus \mathcal{C}_5 \oplus \mathcal{C}_8 \oplus \mathcal{C}_{10,11} \oplus \mathcal{C}_{12}$, where $\mathcal{C}_{10,11}\cong \lcf \lambda^{2,0} \rcf$ denotes certain diagonal in the space $\mathcal{C}_{10} \oplus \mathcal{C}_{11} \cong \lcf \lambda^{2,0} \rcf \oplus \lcf \lambda^{2,0} \rcf$,  and  
\begin{align*}
&N_{\varphi \, W_1}  =  N(\xi_{(1)} + \xi_{(2)}), 
 &N_{\varphi \, W_2}   =   N(\xi_{(9)} + \xi_{(10)}  + \xi_{(11)}) - 2 ( \xi_{(10)} \eta) \otimes \zeta ,
 \\
  &N_{\varphi \, W_3}  =  N(\xi_{(6)} + \xi_{(7)}) = d \eta_{[ \lambda^{1,1}]} \otimes  \zeta,
  &N_{\varphi \, W_4}  =  - d \eta_{\lcf \lambda^{2,0} \rcf} \otimes  \zeta  = 2( \xi_{(10)} \eta) \otimes \zeta. 
  \end{align*}
Therefore, the remaining information about $\xi$ above mentioned, included in $\xi_{(2)}$ and $\xi_{(9)}$, is located in  $N_{\varphi \, W_1}$ and $N_{\varphi \, W_2}$.

\begin{example}[\bf The hyperbolic space]
{\rm The following example has been already considered in \cite{ChineaGonzalezDavila,GDMC1}.  Let $\mathcal{H} = \{ (x_1, \dots , x_{2n+1}) \in \mathbb{R}^{2n+1} \; | \; x_1 >0\}$ be the $(2n+1)$-dimensional hyperbolic space  with the Riemannian metric
  $$
\textstyle  \langle \cdot , \cdot  \rangle = \frac1{c^2x_1^2} \left( dx_1 \otimes dx_1 + \ldots +   dx_{2n+1} \otimes dx_{2n+1} \right).
  $$
With respect to this metric,   $\{ E_1, \ldots , E_{2n+1}\}$ is an orthonormal frame field, where $
E_i = c x_1 \frac{\partial}{\partial x_i}$,  $i=1, \ldots , 2n+1$.  For the Lie brackets, one has $[E_1, E_j] = cE_j$, $j=2 , \ldots , 2n+1$. The remaining Lie brackets relative to this frame are zero.

The corresponding metrically equivalent coframe is  $\{ e_1, \ldots , e_{2n+1}\}$, where  $e_i = \tfrac{1}{ cx_1} dx_i$. Note that 
$ de_i =  - ce_1  \wedge e_i$,  $i=1, \ldots , 2n+1$.

The almost contact metric structure $(\varphi = \sum_{i,j=1}^{2n+1} \varphi^i_j e_j \otimes E_i, \zeta , \eta, \langle \cdot , \cdot \rangle)$ is considered in \cite{ChineaGonzalezDavila}. The functions $\varphi^i_j$ are constant, $n\geq 2$ and  
$\zeta = \sum_{i=1}^{2n+1}   x_1 k_i \frac{\partial}{\partial x_i} =  \sum_{i=1}^{2n+1} \frac{k_i}{c} E_i$, being $k_i=$ constant  and $k_1^2 + \ldots + k_{2n+1} ^2 = c^2$.
The one-form $\eta$ and the  fundamental form $F$ 
are given by 
$$
\textstyle \eta = \sum_{i=1}^{2n+1}   \frac{k_i}{c} e_i,  \quad F = \sum_{i,j=1}^{2n+1} \varphi^i_j e_i \wedge e_j. 
$$
Then their exterior derivatives are expressed as
$ 
d \eta = - c e_1 \wedge \eta$,  $dF = - 2 c e_1 \wedge F.
$
Hence $d \eta = \xi_\zeta \eta \wedge \eta \in \lcf \lambda^{1,0} \rcf \cong \mathcal{C}_{12}$, where $\xi_\zeta \eta = k_1 \eta - c e_1$, and $dF = 2 (k_1 \eta - c e_1) \wedge F - 2 k_1 \eta  \wedge F \in \lcf \lambda^{1,0} \rcf \wedge F + \mathbb{R} \eta \wedge F \cong \mathcal{C}_{4} \oplus \mathcal{C}_{5}$. Since $dF_{\lcf \lambda^{1,0} \rcf \wedge F} =  
\theta  \wedge F$ and $dF_{\mathbb{R} \eta  \wedge F} = - \frac{d^*\eta}{n} \eta \wedge F$, we have 
$$
\textstyle 
\theta  =    
2 \xi_\zeta \eta, \quad d^* \eta = 4n k_1. 
$$ 

Now, by using the Lie brackets described above, one can check that $N_\varphi (E_i , E_j) = 0$. From all of this,  the structure is of type $\mathcal{C}_4 \oplus \mathcal{C}_5 \oplus \mathcal{C}_{12}$ as it  was shown  in \cite{ChineaGonzalezDavila}.

Note that $ d \theta 
= k_1 
\theta \wedge \eta$, 
 $d\xi_\zeta \eta = k_1 \xi_\zeta \eta \wedge \eta$ and  $d(d^*\eta) = 0$.  From  $d\xi_\zeta \eta = k_1 \xi_\zeta \eta \wedge \eta$,  using Lemma  \ref{comp12}, we deduce  $\nabla^{\Lie{U}(n)}_\zeta \xi_\zeta \zeta =0$. This can be checked by using $\nabla^{\Lie{U}(n)} = \nabla + \xi$ and taking into account that for the Levi Civita connection one has 
  $$
\nabla_{E_i} E_i = c E_1, \qquad \nabla_{E_i} E_1 = - c E_i, \qquad i=2 , \ldots , 2n+1,
$$
being  $\nabla_{E_i} E_j =0$ for the remaining cases. In fact,
$$
\textstyle\nabla^{\Lie{U}(n)}_\zeta \xi_\zeta \zeta = - c \nabla_\zeta E_1 - c \xi_{(12)} E_1 = c \sum_{i=2}^{2n+1} k_i E_i - c ( c \zeta - k_1 E_1) =0.
$$ 
 From these comments, it is also immediate to  check the identities for the components of   $d \theta  
 $ given in Propositions \ref{divergenciaunouno}, \ref{divergenciadoscero} and  \ref{divergencia}.
 
 Particular cases are:

\noindent (i) $k_1 =0$ and $n>1$. The structure is of strict  type $\mathcal{C}_4 \oplus \mathcal{C}_{12}$. The one-forms 
$\theta$ and $\xi_\zeta \eta$ are   closed. In fact, $\xi_\zeta \eta = - d (\ln x_1)$. If we do the conformal change of metric $x_1^2 \langle \cdot , \cdot \rangle$, we obtain the flat cosymplectic structure on $\mathcal{H}$ as an open  set of $\mathbb{R}^{2n+1}$ with the Euclidean metric.

\noindent (ii) $k_1 =0$ and $n=1$. The structure is of strict  type $ \mathcal{C}_{12}$ and $\xi_\zeta \eta$ is    closed.

\noindent (iii) $k_1 =1$.  The structure is of strict  type $\mathcal{C}_5$ with $d^* \eta = 2n$. 

\vspace{2mm}

\noindent {\bf  Some part of $\mathcal{H}$ with another metric}. Now we take the subset $\{ p \in \mathcal{H} \; | \; x_2(p) > 0 \}$ and, on this set, consider $c=1$,   the one-form   $e_{o\,1} = \frac{x_2}{x_1} d x_1$ and the metric
$$
\langle \cdot , \cdot  \rangle_o =  e_{o\,1} \otimes e_{o\,1} + \tfrac{1}{ x_1^2}  \textstyle \sum_{i=2}^{2n+1} dx_i \otimes dx_i. 
$$
An orthonormal frame field is given by $\{ E_{o\,1} =  \frac{ x_1}{x_2} \frac{\partial}{\partial x_1}, E_2, \dots , E_{2n+1} \}$.
\vspace{2mm}

{\it A first  almost contact metric structure}:  Now we consider the almost contact metric structure $(\varphi = \sum_{i,j=2}^{2n+1} \varphi^i_j e_j \otimes E_i, \zeta = E_{o\,1}  , \eta = e_{o\,1}, \langle \cdot , \cdot \rangle_o)$, such that $\varphi^i_j$ are constant and   $n>1$.

The exterior derivative $d \eta$ is given by 
$$
d \eta =  dx_2 \wedge  \tfrac{1}{cx_1} dx_1 = \tfrac{x_1}{x_2} e_2 \wedge \eta \in \lcf \lambda^{1,0} \rcf \wedge \eta \cong \mathcal{C}_{12}.
$$
Hence it is obtained $\xi_\zeta \eta = \frac{x_1}{x_2} e_2 = d( \ln x_2) $ which is closed.

Now taking $d e_i = - \frac{1}{x_2} \eta \wedge e_i$, $i=2, \ldots , 2n+1$, into account, we have 
$$
d F = - \tfrac{2 }{x_2} \eta \wedge F \in \mathbb{R} \eta \wedge F \cong \mathcal{C}_5.
$$
Therefore,  in this case  $d^* \eta =  \frac{2n}{x_2}$ which is not   constant. However,   $d(d^*\eta ) = - d^*\eta \xi_\zeta \eta  $ as  it is expected by  Proposition \ref{divergencia}.  In this case,  $d(d^*\eta)(\zeta) =0$.

For the Lie brackets,  one has $[\zeta, E_2] = \frac{x_1}{x_2} \zeta + \frac{1}{x_2} E_2$,  $[\zeta, E_i] =  \frac{1}{x_2} E_i$, $i=3 , \ldots , 2n+1$. The remaining Lie brackets  are zero.  Taking this into account, it is obtained  $N_\varphi =0$.

From all of this,  we conclude that the almost contact metric structure is of type $\mathcal{C}_5 \oplus \mathcal{C}_{12}$. 

Finally, if we do a conformal change of the metric $ e^{a} \langle \cdot , \cdot \rangle_o$, where $e^a =x_2^{-1}$, we will obtain an almost contact metric structure of type $\mathcal{C}_4 \oplus \mathcal{C}_5$. Denoting the new intrinsic torsion by $\xi_a$ and   the  one-form $\eta_a$ is the one metrically equivalent to the new Reeb vector field, we have $\theta  =  -  2 d(\ln x_2)$, and $d^* \eta_a = 2n$.

\vspace{2mm}

{\it A second almost contact metric structure}:  Now we consider the almost contact metric structure $(\varphi = \sum_{i,j=1}^{2n+1} \varphi^i_j e_j \otimes E_i, \zeta =E_2 , \eta , \langle \cdot , \cdot \rangle_o)$, such that $\varphi^i_j$ are constant and   $n >1$. In this case, we denote $e_{o\,1} =   \frac{x_2}{ x_1} d x_1$, $de_{o\,1} =  - \frac{ x_1}{x_2} \eta \wedge e_{o\,1}$, and $d e_i =- \frac{1}{x_2} e_{o\,1}  \wedge e_i$, $i=3, \ldots , 2n+1$. The exterior derivatives of $\eta$ and $F$ are  given by
$$
d \eta = - \tfrac{1}{x_2} e_{o\,1}  \wedge \eta \in \lcf \lambda^{1,0} \rcf \wedge \eta \cong \mathcal{C}_{12}.
$$
$$
dF  =  - \tfrac{2}{x_2} e_{o\,1} \wedge F   + \tfrac{x_1}{ x_2} \eta \wedge \varphi e_{o\,1} \wedge e_{o\,1}\in \lcf \lambda^{1,0} \rcf \wedge F + \eta \wedge  [\lambda^{1,1} ]   \cong \mathcal{C}_4 \oplus \mathcal{C}_5 \oplus \mathcal{C}_8.
$$

  For the Nijenhuis tensor we obtain
 $$
 N_\varphi = \tfrac{1}{x_2}  e_{o\,1} \wedge \varphi e_{o\,1} \otimes \varphi E_{o\,1} +   \tfrac{1}{x_2} \textstyle \sum_{i=3}^{2n+1}  e_{o\,1} \wedge e_i \otimes E_i
  -  \tfrac{x_1}{x_2} \eta \wedge  e_{o\,1} \otimes  E_{o\,1} 
 +  \tfrac{x_1}{x_2} \eta \wedge \varphi e_{o\,1} \otimes \varphi E_{o\,1}.
 $$ 
Thus in this case $N_\varphi \in N(\mathcal{C}_2) \oplus N(\mathcal{C}_9)$. Therefore, the almost contact structure is of type
$\mathcal{C}_{2} \oplus \mathcal{C}_{4} \oplus 
 \mathcal{C}_{5} \oplus  \mathcal{C}_{8}  \oplus   \mathcal{C}_{9} \oplus  \mathcal{C}_{12}$.
 
 Note that in this case 
$
\theta = 2 \xi_{\zeta} \eta = - \tfrac{(n-1) }{x_2} e_{o\,1}$,   $d^*\eta = -\tfrac{x_1}{x_2}.
$
Hence
$
d \theta =  d \xi_{\zeta} \eta =  0 ,  \; \;  d(d^*\eta) =  d^*\eta  \xi_\zeta \eta   +  (d^*\eta)^2 \eta \;$ and  $\;  d(d^*\eta \; \eta)=0.
$
Note that $d (d^* \eta)(\zeta) =  (d^*\eta)^2$ which is not constant.

Finally, for the conformal change of the metric $ e^{2a} \langle \cdot , \cdot \rangle_o$, where $e^a = x_1x_2^{-\frac1{2n}}$, we will obtain an almost contact metric structure of type $\mathcal{C}_2 \oplus \mathcal{C}_8 \oplus \mathcal{C}_9$. In fact,     $\eta_a$ is closed and 
$$
dF_a = x_2^{-\frac{2n-1}{2n}}  \eta_a \wedge \varphi_a e_{a\,1} \wedge e_{a\,1} - \tfrac{1}{n}   x_2^{-\frac{2n-1}{2n}}  \eta_a \wedge F_a \in \eta \wedge [\lambda_0^{1,1} ] \cong \mathcal{C}_8.
$$
}
\end{example}

\end{document}